\theoremstyle{plain}
\newtheorem{theorem}{Theorem}[section]
\newtheorem{lemma}{Lemma}[section]
\numberwithin{equation}{section}
\def\tht{\theta}
\def\Om{\Omega}
\def\om{\omega}
\def\e{\varepsilon}
\def\l{\lambda}
\def\p{\partial}
\def\D{\Delta}
\def\a{\alpha}
\def\b{\beta}
\def\d{\delta}
\def\z{\zeta}
\def\vk{\varkappa}
\def\hf{\mathfrak{h}}
\def\la{\langle}
\def\ra{\rangle}
\def\cL{\mathcal{L}}
\DeclareMathOperator{\RE}{Re}
\DeclareMathOperator{\mes}{mes}
\DeclareMathOperator{\dist}{dist}
\DeclareMathOperator*{\esssup}{ess\,sup}
\begin{document}

\title{Operator estimates for non-periodically perforated domains: %with nonlinear Robin conditions:
disappearance of cavities}

\author{D.I. Borisov$^{1,2,3}$
}

\date{\empty}

\maketitle

{\small
    \begin{quote}
1) Institute of Mathematics, Ufa Federal Research Center, Russian Academy of Sciences,  Chernyshevsky str. 112, Ufa, Russia, 450008
\\
2) Bashkir State  University, Zaki Validi str. 32, Ufa, Russia, 450076
\\
3) University of Hradec Kr\'alov\'e
62, Rokitansk\'eho, Hradec Kr\'alov\'e 50003, Czech Republic
\\
Emails: borisovdi@yandex.ru
\end{quote}

{\small
\begin{quote}
\noindent{\bf Abstract.}  We consider a boundary value problem for a general second order linear equation in a perforated domain. The perforation is made by small cavities, a minimal distance between the cavities is also small. We impose minimal natural geometric conditions on the shapes of the cavities and no conditions on their distribution in the domain. On the boundaries of the cavities a nonlinear Robin condition is imposed. The sizes of the cavities  and the minimal distance between them are supposed to satisfy a certain simple condition ensuring that under the   homogenization the cavities disappear and we obtain a similar problem in a non-perforated domain.  Our main results state  the convergence of the solution of the perturbed problem to that of the homogenized one in $W_2^1$- and $L_2$-norms uniformly in $L_2$-norm of the right hand side in the equation and provide the estimates for the convergence rates. We also discuss the order sharpness of these estimates.

\medskip

\noindent{\bf Keywords:} perforated domain, non-periodic perforation, operator estimates, convergence rate

\medskip

\noindent{\bf Mathematics Subject Classification:} 35B27, 35B40
 	
\end{quote}
}

\section{Introduction}

During last 20 a new direction devoted to so-called operator estimates appeared in the homogenization theory. Here the boundary value problems  are treated in terms of the spectral theory, that is, the solutions are regarded as actions of the resolvents of the operators corresponding to the considered boundary value problems. The efforts are focused on proving the convergence of the resolvent in an appropriate operator norms, that is, on proving the norm resolvent convergence. The operator estimates are ones for the convergence rates.

Elliptic boundary value problems in domains with a fine perforation distributed along an entire domain are classical in the homogenization theory, see \cite{Sha}, \cite{ZKO}, \cite{MK}, \cite{MK1}, \cite{OIS}, and the references therein, and the classical results stated the convergence in a strong or weak sense in $L_2$ and $W_2^1$ for fixed right hand sides in the equations and boundary conditions. Operator estimates for such problems were recently obtained in few papers \cite{Khra}, \cite{Khra2}, \cite{Past}, \cite{Sus}, \cite{Zhi} for periodic and close-to-periodic perforations.
In \cite{Past}, \cite{Sus}, \cite{Zhi} the case of the Neumann condition was studied in the situation when the sizes of the cavities were of the same order as the distances between them; the perforation was pure periodic. The case of the Dirichlet condition on the boundaries of the cavities
was addressed in \cite{Khra2}; here the sizes of the cavities and the distances between them satisfied certain relation. The cavities were of the same shape but it was allowed to rotate them arbitrary and to locate almost arbitrary within the periodicity cell. In \cite{ChDR}, \cite{Khra},  there was considered the case of a pure periodic perforation by small  balls with the Dirichlet or Neumann \cite{ChDR} or Robin \cite{Khra}  condition on the  boundaries.  In all cited papers various operator estimates were obtained; at the same time, their order sharpness was not established and not discussed.

The case of a non-periodic perforation was addressed in \cite{Post} for a manifold perforated by arbitrary cavities with the Dirichlet or Neumann condition; as the operator, the Laplacian served. The main results were the operator estimates, which were proved under assuming the validity of certain local estimates for $L_2$-norms in terms of $W_2^1$-norms. These local estimates were the key ingredients in the proofs of the operator estimates. As an example it was shown that these local estimates are valid for the perforation by small balls.

There is also a close direction on studying operator estimates for domain perforated along a given manifold, see  \cite{MSB21}, \cite{AA22}, \cite{PRSE16}. Here the perforation was essentially non-periodic: the shapes of the cavities and their distribution were arbitrary and satisfied only minimal geometric assumptions. The main results of the cited papers were the operator estimates, which were shown to be order sharp in many cases.

In two very recent papers \cite{BK}, \cite{JST22}, an elliptic boundary value problem in a perforated domain was considered. The equation was defined by a general second order differential expression  varying coefficients, which was not formally symmetric. The perforation was arbitrary and made along the entire domain. Each cavity had its own shape and the distribution of the cavities in the domain was almost arbitrary. On the boundary of each cavity the Dirichlet or a nonlinear Robin condition was imposed; both boundary conditions were allowed to be present on different boundaries at the same time. The sizes of the cavities and the mutual distances between them satisfied certain conditions, which determined then the form of the homogenized problems. In \cite{BK} these conditions were chosen so that a solution to the perturbed problem tended to zero uniformly in the right hand side in the equation as the perforation became finer. In \cite{JST22} similar conditions ensured the appearance of a strange term in the homogenized problem. In both papers the estimates for the convergence rates were obtained; the order sharpness of these estimates was discussed.

In the present paper we continue studying the general model proposed in \cite{BK}, \cite{JST22} but in a different situation. Namely, now we impose only the nonlinear Robin condition on the boundaries of the cavities and impose conditions ensuring that under the homogenization the cavities disappear and make no contribution to the homogenized problem. In the considered case we succeed to make only minimal and very natural assumptions about the perforation, which are satisfied for a very wide class of non-periodic perforations: we have almost no conditions on the shapes of the cavities and no conditions on their distribution. Our main results are the operator estimates in the considered case. More precisely, we estimate the $W_2^1$- and  $L_2$-norms of the difference of the solutions to the perturbed and homogenized problems uniformly in $L_2$-norm of the right hand side in the equation. The established inequalities provide the estimates for the convergence rates. We show that the estimate for $W_2^1$-norm is order sharp in all dimensions except for $4$, while in the latter dimension it is very close to being order sharp.

\section{Problem and main results}\label{sec2}

Let $\Om$ be  an arbitrary  domain in $\mathds{R}^d$, $d\geqslant 2$, which can be bounded or unbounded. If it has a  boundary, its smoothness is supposed to be $C^2$. Let $M_k^\e\in\Om$, $k\in\mathds{I}^\e$, be a family of points, where $\e$ is a small positive parameter and $\mathds{I}^\e$ is a set of indices, which is at most countable,
 and  $\om_{k,\e}\subset \mathds{R}^d$, $k\in\mathds{I}^\e$, be a family of bounded non-empty domains with $C^1$-boundaries. We define
\begin{equation*}
\om_k^\e:=\big\{x:\, \e^{-1}\eta^{-1}(\e)(x - M_k^\e)\in \om_{k,\e}\big\}, \quad k\in\mathds{I}^\e,\qquad \tht^\e:=\bigcup\limits_{k\in\mathds{I}^\e} \om_k^\e,
\end{equation*}
where $\eta=\eta(\e)$ is a given function such that $0\leqslant \eta(\e)\leqslant 1$. A perforated domain is introduced as   $\Om^\e:=\Om\setminus\tht^\e$.

In the vicinity of the boundaries $\p\om_{k,\e}$ we introduce local variable  $(\tau,s)$, where $\tau$ is the distance measured along the normal vector to $\p\om_{k,\e}$ directed inside $\om_{k,\e}$ and $s$ are some local variables on $\p\om_{k,\e}$. By $B_r(M)$ we denote a ball in $\mathds{R}^d$ of a radius $r$ centered at a point $M$.
The cavities $\om_k^\e$ are supposed to satisfy the following geometric assumption.

\begin{enumerate}
\def\theenumi{A}
\item\label{A1} The points $M_k^\e$ and the domains $\om_{k,\e}$ obey the conditions
\begin{equation}
B_{R_1}(y_{k,\e})\subseteq \om_{k,\e}\subseteq B_{R_2}(0), \qquad
B_{\e R_3}(M_k^\e)\cap B_{\e R_3}(M_j^\e)=\emptyset, \qquad
\dist(M_k^\e,\p\Om)\geqslant R_3\e,
\label{2.2a}
\end{equation}
where $y_{k,\e}$ are some points, $k\ne j$, $k,j\in\mathds{I}^\e$, and $R_1<R_2<R_3$ are some fixed constants independent of $\e$, $\eta$, $k$ and $j$. The sets $B_{R_2}(0)\setminus \om_{k,\e}$ are connected. For each $k\in\mathds{I}^\e$ there exist a local variable $s$ on $\p\om_{k,\e}$ such that the variables $(\tau,s)$ are  well-defined at least on $\{x\in\mathds{R}^d:\, \dist(x,\p\om_{k,\e})\leqslant \tau_0\}\subseteq B_{R_2}(0)$, where $\tau_0$ is a fixed constant independent of $k\in\mathds{I}^\e$ and $\e$. The Jacobians corresponding to passing from variables $x$ to $(\tau,s)$ are separated from zero and bounded from above uniformly in $\e$, $k\in\mathds{M}_R^\e$ and $x$ as $\dist(x,\p\om_{k,\e})\leqslant \tau_0$. The first derivatives of $x$ with respect to $(\tau,s)$ and of $(\tau,s)$ with respect to $x$
are bounded uniformly in $\e$, $k\in\mathds{M}_R^\e$ and $x$ as $\dist(x,\p\om_{k,\e})\leqslant \tau_0$.
\end{enumerate}

By $L_\infty(\Om;\mathds{C}^n)$, $n\geqslant 1$,  we denote the space of vector function with values in $\mathds{C}^n$, each component of which is an element of $L_\infty(\Om)$. The space $L_\infty(\Om;\mathds{C}^n)$ is equipped with the standard norm
$$
\|u\|_{L_\infty(\Om;\mathds{C}^n)}:=\esssup\limits_{x\in\Om} |u(x)|.
$$
By $\mathds{M}_n$ we denote the linear space of all $n\times n$ matrices. The symbol $L_\infty(\Om;\mathds{M}_n)$ stands for the space of matrix functions with values in $\mathds{M}_n$, each entry of which belongs to $L_\infty(\Om)$. Throughout the work we shall also employ similar Lebesgue and Sobolev spaces of vector- and matrix-valued functions; the symbols $\mathds{C}^n$ and $\mathds{M}_n$ will indicate a space, to which the values of these functions belong.

Let $A_{ij}=A_{ij}(x)$, $A_j=A_j(x)$, $A_0=A_0(x)$ be  complex-valued matrix functions  defined on the domain $\Om$ and obeying the conditions
\begin{gather}\label{2.5}
 A_{ij}\in W_\infty^1(\Om;\mathds{M}_n), \qquad A_j,\, A_0\in L_\infty(\Om;\mathds{M}_n),
\\
 \RE \sum\limits_{i,j=1}^{d} \big(A_{ij}(x) z_i, z_j\big)_{\mathds{C}^n}\geqslant c_1 \sum\limits_{i=1}^{n} |z_i|^2,
 \qquad x\in\Om,\quad z_i\in\mathds{C}^n,\label{2.5a}
\end{gather}
where $c_1>0$  is some fixed constant independent of  $x\in\Om$ and $z_i\in\mathds{C}^n$.
By $a^\e=a^\e(x,u)$ we denote a measurable vector function   with values in $\mathds{C}^n$ defined on $\p\tht^\e\times\mathds{C}^n$ and satisfying the following conditions:
\begin{equation}\label{2.6}
|a^\e(x,u_1)-a^\e(x,u_2)|\leqslant \mu(\e) |u_1-u_2|,\qquad a^\e(x,0)=0,
\end{equation}
where  $\mu(\e)$ is some nonnegative function  independent of $x\in\p\tht^\e$ and $u_1, u_2\in\mathds{C}^n$. We assume that for the functions $\eta$ and $\mu$  the convergences hold:
\begin{gather}\label{2.10}
\big(\e\eta(\e)\vk(\e)+\e^{-1}\eta^{n-1}(\e)\big)\mu(\e)\to+0,\qquad \eta(\e)\to+0,  \qquad \e\to+0,
\\
\vk(\e):=|\ln\eta(\e)|\quad\text{as}\quad n=2,\qquad
\vk(\e):=1\quad\text{as}\quad n\geqslant 3.\nonumber
\end{gather}

In this paper we consider the following boundary value problem:
\begin{equation}\label{2.7}
(\cL-\l)u^\e=f\quad\text{in}\quad \Om^\e,\qquad  \frac{\p u^\e}{\p\boldsymbol{\nu}} + a^\e(x,u^\e)=0\quad\text{on}\quad \p\tht^\e,\qquad u^\e=0\quad\text{on}\quad\p\Om.
\end{equation}
Here  $\cL$ and  $\frac{\p\ }{\p\boldsymbol{\nu}}$
are  a differential expression and a conormal derivative:
\begin{equation*}
\cL:=-\sum\limits_{i,j=1}^{d} \frac{\p\ }{\p x_i} A_{ij} \frac{\p\ }{\p x_j}
  + \sum\limits_{j=1}^{n} A_j\frac{\p\ }{\p x_j}  + A_0,\qquad \frac{\p\ }{\p\boldsymbol{\nu}} =\sum\limits_{i,j=1}^{d} \nu_i A_{ij}\frac{\p\ }{\p x_j},
\end{equation*}
$f\in L_2(\Om;\mathds{C}^n)$ is an arbitrary vector function, $\l\in\mathds{C}$ is a fixed constant, $\nu=(\nu_1,\ldots,\nu_d)$ is the unit normal to $\p\tht^\e$ directed inside $\tht^\e$. Our aim is to prove that, under certain conditions, the solution of this problem converges to that of a homogenized problem
\begin{equation}\label{2.11}
(\cL-\l)u_0=f\quad\text{in}\quad \Om,\qquad u_0=0\quad\text{on} \quad \p\Om,
\end{equation}
and to estimate $W_2^1$- and $L_2$-norms of $u_\e-u_0$ uniformly in the $L_2$-norm of the right hand side $f$.

Solutions to problems (\ref{2.7}) and (\ref{2.11}) are  understood in the generalized sense. Namely, a generalized solution to problem (\ref{2.7})
is  a vector function $u\in W_2^1(\Om^\e;\mathds{C}^n)$ with the zero trace on $\p\Om$ such that
\begin{equation}\label{2.18}
\hf^\e(u_\e,v)-\l(u_\e,v)_{L_2(\Om^\e;\mathds{C}^n)} =(f,v)_{L_2(\Om^\e;\mathds{C}^n)}
\end{equation}
for each $v\in W_2^1(\Om^\e;\mathds{C}^n)$ with the zero trace on $\p\Om$, where
\begin{align*}
&\hf^\e(u,v):=\hf(u,v) + (a^\e(\,\cdot\,,u),v)_{L_2(\p\tht^\e;\mathds{C}^n)},
\\
&\hf(u,v):=\sum\limits_{i,j=1}^{d}\left(A_{ij}\frac{\p u}{\p x_j},\frac{\p v}{\p x_i}\right)_{L_2(\Om^\e;\mathds{C}^n)} + \sum\limits_{j=1}^{n} \left(A_j \frac{\p u}{\p x_j}, v\right)_{L_2(\Om^\e;\mathds{C}^n)} + (A_0 u,v)_{L_2(\Om^\e;\mathds{C}^n)}.
\end{align*}
We denote:
\begin{equation*}
\kappa(\e):=0\quad\text{as}\quad n=2,3,\qquad \kappa(\e):=|\ln\eta(\e)|^\frac{1}{2}\quad\text{as}\quad n=4,
\qquad \kappa(\e):=1\quad\text{as}\quad n\geqslant 5.
\end{equation*}

Our main result is as follows.

\begin{theorem}\label{th1}
Let Assumption~\ref{A1} and (\ref{2.10}) be satisfied.   Then there exists a fixed $\l_0\in\mathds{R}$ independent of $\e$ such that as $\RE\l\leqslant \l_0$, problems (\ref{2.7}), (\ref{2.11}) are uniquely solvable for each $f\in L_2(\Om;\mathds{C}^n)$ and the solutions  satisfy the estimate:
\begin{equation}\label{2.16}
\|u_\e-u_0\|_{W_2^1(\Om^\e;\mathds{C}^n)}  \leqslant  C(\l) \big( (\e^2\eta^2\kappa+\eta^{\frac{n}{2}}\vk^{\frac{1}{2}} + \e^{-1}\eta^{n-1})\mu+\e\eta\vk^\frac{1}{2} +  \eta^\frac{n}{2}\big)
\|f\|_{L_2(\Om;\mathds{C}^n)},
\end{equation}
where $C$ is some constant independent of $\e$ and $f$. This estimate is order sharp for $n\ne 4$, while for $n=4$ all terms  in the right hand except for $\e^2\eta^2\kappa\mu$ are order sharp.
If, in addition, $A_j\in W_\infty^1(\Om;\mathds{C}^n)$,
then
\begin{equation}
\label{2.17}
\begin{aligned}
\|u_\e-u_0\|_{L_2(\Om^\e;\mathds{C}^n)}\leqslant C \big(& (\e^2\eta^2\kappa+\eta^{\frac{n}{2}}\vk^{\frac{1}{2}} + \e^{-1}\eta^{n-1}) \mu +\e^2\eta^2\vk +  \eta^n \big)  \|f\|_{L_2(\Om;\mathds{C}^n)}
 \\
 &+\big(\e\eta\vk^\frac{1}{2}+\eta^\frac{n}{2}\big) \|f\|_{L_2(\tht^\e;\mathds{C}^n)},
\end{aligned}
\end{equation}
where $C$ is some constant independent of $\e$ and $f$.
\end{theorem}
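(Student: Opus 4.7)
The strategy is the standard one for operator estimates: compare $u_\e$ with the restriction of $u_0$ to the perforated domain by inserting the difference $w_\e:=u_\e-u_0$ as a test function in the variational identity. Subtracting (\ref{2.18}) for $u_\e$ from the analogous identity for $u_0$ (obtained from (\ref{2.11}) by integrating by parts on $\Om^\e$) gives, for any $v\in W_2^1(\Om^\e;\mathds{C}^n)$ vanishing on $\p\Om$,
\begin{equation*}
\hf(w_\e,v)-\l(w_\e,v)_{L_2(\Om^\e;\mathds{C}^n)}=\Bigl(\frac{\p u_0}{\p\boldsymbol{\nu}},v\Bigr)_{\!\!L_2(\p\tht^\e;\mathds{C}^n)}-\bigl(a^\e(\,\cdot\,,u_\e),v\bigr)_{L_2(\p\tht^\e;\mathds{C}^n)}.
\end{equation*}
Picking $v=w_\e$ and choosing $\l_0$ negative enough that $\hf-\l$ is coercive uniformly in $\e$ (after absorbing the Lipschitz Robin contribution into the gradient energy via (\ref{2.10})), the problem reduces to estimating the two surface integrals on $\p\tht^\e$.

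The central technical ingredient is a local trace inequality
\begin{equation*}
\|v\|_{L_2(\p\om_k^\e;\mathds{C}^n)}^2\leqslant C\bigl(\e\eta(\e)\vk(\e)\|\nabla v\|_{L_2(B_{R_3\e}(M_k^\e)\cap\Om^\e)}^2+\e^{-1}\eta^{n-1}(\e)\|v\|_{L_2(B_{R_3\e}(M_k^\e)\cap\Om^\e)}^2\bigr),
\end{equation*}
proved by rescaling $x\mapsto\e^{-1}(x-M_k^\e)$ so that each cavity becomes of size $\eta$ inside $B_{R_3}(0)$, and then applying the standard trace theorem on a Lipschitz boundary together with a logarithmic ($n=2$) or power ($n\geqslant 3$) capacity estimate on the surrounding annulus. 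Assumption~\ref{A1} (uniform Jacobians, the universal inner ball $B_{R_1}$, connectedness of $B_{R_2}\setminus\om_{k,\e}$) ensures the constants are $k$- and $\e$-independent; summing over $k\in\mathds{I}^\e$ using the disjointness in (\ref{2.2a}) yields the global version on $\p\tht^\e$. Fed back into (\ref{2.18}) with $v=u_\e$, this produces the a priori bound $\|u_\e\|_{W_2^1(\Om^\e;\mathds{C}^n)}\leqslant C\|f\|_{L_2(\Om;\mathds{C}^n)}$ and unique solvability of both problems for $\RE\l\leqslant\l_0$; standard elliptic regularity supplies $\|u_0\|_{W_2^2(\Om;\mathds{C}^n)}\leqslant C\|f\|_{L_2(\Om;\mathds{C}^n)}$.

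The $(\p u_0/\p\boldsymbol{\nu},w_\e)_{\p\tht^\e}$ integral cannot be handled by a direct Cauchy--Schwarz plus the trace inequality on $\nabla u_0$: that yields suboptimal powers. Instead I would integrate by parts on each shell $B_{R_3\e}(M_k^\e)\cap\Om^\e$, using a smooth cutoff $\chi_k$ equal to one near $\p\om_k^\e$ and supported in $B_{R_3\e}(M_k^\e)$, trading the dangerous surface integral for a bulk integral of $\cL u_0=f+\l u_0$ against $w_\e\chi_k$ and of $A_{ij}\p_j u_0$ against $\p_i(w_\e\chi_k)$ supported in a thin shell around $\p\tht^\e$. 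Bounding these by Cauchy--Schwarz--Young against $\|w_\e\|_{W_2^1(\Om^\e)}$, together with the $W_2^2$-regularity of $u_0$ and the volume estimate for the shells, delivers exactly the $\mu$-free part $\e\eta\vk^{1/2}+\eta^{n/2}$ of (\ref{2.16}). The Robin contribution $\mu(\e)\|u_\e\|_{L_2(\p\tht^\e)}\|w_\e\|_{L_2(\p\tht^\e)}$ is handled by applying the trace inequality to both factors together with the a priori bounds; the hardest part is sharpening the resulting crude $\e\eta\vk\,\mu$ to the required $\e^2\eta^2\kappa\mu$, which calls for a refined estimate exploiting $u_0\in W_2^2$ and careful weighted bookkeeping in an annular collar around each $\p\om_k^\e$. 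This is precisely where the logarithmic defect in dimension four enters.

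The $L_2$-estimate (\ref{2.17}) is obtained by Aubin--Nitsche duality: for $g\in L_2(\Om^\e;\mathds{C}^n)$ one solves the adjoint homogenized problem $(\cL^{*}-\bar\l)\varphi_g=\tilde g$ in $\Om$ with $\varphi_g=0$ on $\p\Om$, where $\tilde g$ extends $g$ by zero into $\tht^\e$; the hypothesis $A_j\in W_\infty^1(\Om;\mathds{M}_n)$ is needed only so that $\cL^{*}$ remains a second-order expression of the same structure. Testing the identity for $w_\e$ against $\varphi_g$ and integrating by parts produces surface integrals on $\p\tht^\e$ with $\varphi_g$ in place of $w_\e$; each such integral is estimated by the same trace inequality, now picking up an additional smallness factor $\e\eta\vk^{1/2}+\eta^{n/2}$ from $\|\varphi_g\|_{W_2^2(\Om;\mathds{C}^n)}\leqslant C\|g\|_{L_2}$. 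The extra $\|f\|_{L_2(\tht^\e;\mathds{C}^n)}$ term appears because one integrates $u_0$ over $\tht^\e$ rather than over $\Om^\e$. Finally, order sharpness in $n\neq 4$ is verified by an explicit computation on a periodic configuration of small balls with constant-coefficient $\cL$ and a carefully chosen right hand side, following the blueprint of \cite{BK,JST22}; in $n=4$ the logarithmic mismatch carried by $\kappa$ is what prevents a matching lower bound for the single term $\e^2\eta^2\kappa\mu$.
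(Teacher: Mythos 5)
Your overall skeleton (energy identity for $w_\e=u_\e-u_0$, uniform coercivity for $\RE\l\leqslant\l_0$, the trace inequality of Lemma~\ref{lm3.1}, and the duality argument with the adjoint problem for the $L_2$-bound) coincides with the paper's, but the two places where the actual work of the theorem is done are left with genuine gaps. First, your treatment of the surface term $\bigl(\tfrac{\p u_0}{\p\boldsymbol{\nu}},w_\e\bigr)_{L_2(\p\tht^\e)}$ by a cutoff and integration by parts over the shells $B_{R_3\e}(M_k^\e)\cap\Om^\e$ does not produce the claimed smallness: the shells have radius $R_3\e$ and may cover a fixed fraction of $\Om$, so the resulting bulk integrals of $\cL u_0=f+\l u_0$ against $w_\e\chi_k$ are only $O(1)\|f\|_{L_2}\|w_\e\|_{W_2^1}$; shrinking the cutoff to the scale $\e\eta$ instead forces a factor $(\e\eta)^{-1}$ from $\nabla\chi_k$ and, after the available local estimates, yields extra terms (in particular an $\e^{-1}\eta^{n-1}$ \emph{not} multiplied by $\mu$, which is not even bounded under (\ref{2.10})). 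The paper avoids this by a different decomposition: on each cavity it splits $w_\e$ into its mean $\la w_\e\ra_k$ over $B_{\e\eta R_3}(M_k^\e)\setminus\om_k^\e$ plus a fluctuation, treats the mean via the divergence theorem over the cavity itself (formula (\ref{4.25}), gaining the volume factor $(\e\eta)^{n/2}$, combined with Lemma~\ref{lm3.6} for $|\la w_\e\ra_k|$) and the fluctuation via the Poincar\'e-type trace estimate of Lemma~\ref{lm3.3}; this is what gives exactly $\e\eta\vk^{1/2}+\eta^{n/2}$.

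Second, for the Robin term you concede that Cauchy--Schwarz with Lemma~\ref{lm3.1} only gives $(\e\eta\vk+\e^{-1}\eta^{n-1})\mu$ and state that upgrading to $(\e^2\eta^2\kappa+\eta^{n/2}\vk^{1/2}+\e^{-1}\eta^{n-1})\mu$ ``calls for a refined estimate'' --- but that refined estimate is precisely the new content of the proof and you do not supply it. The paper proves a separate trace bound for $W_2^2$-functions (Lemma~\ref{lm3.4}): $\|u\|_{L_2(\p\om_k^\e)}^2\leqslant C(\e^{-1}\eta^{n-1}+\kappa^2\e^3\eta^3)\|u\|_{W_2^2(B_{R_3\e}(M_k^\e))}^2$, obtained via Sobolev embedding for $n=2,3$ and, for $n\geqslant4$, via a mean/fluctuation split together with an explicit auxiliary function $X$ solving $\D X=1$ in $B_{\e\eta R_3}$, $\D X=0$ in the annulus; the $L_2$-norm of $X$ is where $\kappa=|\ln\eta|^{1/2}$ enters at $n=4$. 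Applying this lemma to $u_0$ (using $\|u_0\|_{W_2^2}\leqslant C\|f\|_{L_2}$) and Lemma~\ref{lm3.1} to $w_\e$ is what yields the coefficient of $\mu$ in (\ref{2.16}); without it your argument proves only a weaker estimate. Finally, the order-sharpness claims of the theorem are merely deferred to ``explicit examples following the blueprint'' --- the paper's Section~\ref{sec:sharp} constructs these examples in detail (a single cavity for the terms $\e\eta\vk^{1/2}$ and $\e^2\eta^2\mu$, a periodic configuration with a corrector $Y_0$ for $\e^{-1}\eta^{n-1}\mu$, $\eta^{n/2}\vk^{1/2}\mu$ and $\eta^{n/2}$), and this part of the statement remains unproved in your proposal.
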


Let us briefly discuss the problem and main results. We consider a general non-periodic perforation under minimal conditions both for the distribution of the cavities and for their shapes introduced  in Assumption~\ref{A1}. The first two-sided relation in (\ref{2.2a}) says that all cavities are approximately of the same size (but not the shapes!), while two other relations in (\ref{2.2a}) just mean that all cavities are mutually disjoint and do not intersect the boundary of the domain $\Om$. The minimal distance between the cavities is $2R_3\e$ and at the same time, there is no apriori upper bound for the mutual distances. In particular, our model covers also the situations, when some of the distances between some cavities are much larger than $2R_3\e$, for instance, these distances can be finite. The total number of the cavities can be also arbitrary including the case when we deal with finitely many cavities separated by finite distances.

The regularity of the boundaries of the cavities postulated in  Assumption~\ref{A1} is rather natural and not very restrictive. The stated existence of local variables  $(\tau,s)$ is an implicit condition for the uniform (in $k$) regularity of the shapes of the cavities and excludes, for instance, the situation, when the boundaries of the cavities have increasing oscillations on some sequence of the values of $k$. We stress that these conditions do not mean that all cavities are of the same shapes.
Our conditions on the perforation are very weak and natural and we in fact deal with a very large class of non-periodic perforations.

On the boundaries of all cavities we impose the nonlinear Robin condition, see (\ref{2.7}). The growth of the nonlinearity is controlled by conditions (\ref{2.6}). They mean that the nonlinearity is allowed to have at most a linear growth in $u$, while the parameter $\mu$ describes the strength of the nonlinearity. The sizes of all cavities are of order $O(\e\eta)$ and the parameters $\eta$ and $\mu$ are to satisfy convergences (\ref{2.10}). This is the main assumption ensuring that under the homogenization the cavities disappear and make no contribution to the limiting (homogenized) problem. We also stress that the differential expression $\cL$ involved in the equations in these problems is of a general form with varying complex-valued matrix coefficients and this expression is not formally symmetric. It is also important to say that since the coefficients are matrix functions, we in fact deal with a system of scalar equations.

Our main theorem states that under the above discussed conditions, the solution of  problem (\ref{2.7}) converges to that of the homogenized problem uniformly in $L_2(\Om)$-norm of the right hand side in the equation. In the case of the linear Robin condition this means that the linear operator associated with the perturbed problem converges to that associated with the homogenized problem in the norm resolvent sense. Our inequality (\ref{2.16}) provides an estimate for the convergence rate, which even turns out to be order sharp in the dimension $n\ne 4$, while for $n=4$ it is almost order sharp. Namely, as $n=4$, all terms except for $\e^2\eta^2\kappa\mu=\e^2\eta^2\mu|\ln\eta|$ are order sharp. The latter term is also close to being sharp, namely, in Section~\ref{sec:sharp}, while checking the sharpness by providing an appropriate example, we in fact show that the convergence rate can not be smaller than $\e^2\eta^2$ for all dimensions.

Our second estimate (\ref{2.17}) provides the convergence rate in the case, when the difference between the solutions of the perturbed and homogenized problems is measured in $L_2$-norm. Here we see that the coefficient at $\mu$ is the same as in (\ref{2.16}), while the coefficients without $\mu$ at $\|f\|_{L_2(\Om;\mathds{C}^n)}$ have the smallness order twice more than the similar coefficient  in (\ref{2.16}). This is due to the fact that in (\ref{2.17}) we estimate a weaker norm of the difference $u_\e-u_0$ than  in (\ref{2.16}). There is also an additional term in (\ref{2.17}) depending on $\|f\|_{L_2(\tht^\e;\mathds{C}^n)}$. The coefficient at this norm is the same as in (\ref{2.16}) at $\|f\|_{L_2(\Om;\mathds{C}^n)}$. At the same time, the values of the function $f$ in $\tht^\e$ make no influence on the solution of the perturbed problem. In view of this fact, we could apriori suppose that $f$ vanishes on $\tht^\e$ for the considered values of $\e$ but in this case the right hand side becomes $\e$-dependent and the same concerns $u_0$. Nevertheless, in this situation Theorem~\ref{th1} still makes sense since it describes the error we make by replacing the perforated domain by the non-perforated one and this error is estimated uniformly in the right hand side $f$. If we consider the case of some fixed $\e$-independent function $f$ then it is reasonable to suppose that it is non-zero on $\tht^\e$. In such situation the presence of the additional term in (\ref{2.17}) concerns estimating the contribution of the restriction of $f$ on $\tht^\e$ into the function $u_0$ and not to the difference $u_\e-u_0$. The order sharpness of inequality (\ref{2.17}) remained an open question: our examples in Section~\ref{sec:sharp} turned out to be inappropriate for such $L_2$-estimate.

\section{Operator estimates}

In this section we prove Theorem~\ref{th1}. We begin with auxiliary lemmata proved
in \cite{BK}, \cite{JST22}.  The first of them is Lemma~3.6 from \cite{BK}.

\begin{lemma}\label{lm3.1}
Under Assumption~\ref{A1}
for all $k\in\mathds{I}^\e$ and all $u\in W_2^1(B_{\e R_3}(M_k^\e)\setminus\om_k^\e)$ the estimate
\begin{equation*}
\|u\|_{L_2(\p\om_k^\e)}^2 \leqslant  C\Big(\e\eta \vk\|\nabla u\|_{L_2(B_{\e R_3}(M_k^\e)\setminus\om_k^\e)}^2 +\e^{-1}\eta^{n-1}
\|u\|_{L_2(B_{\e R_3}(M_k^\e)\setminus B_{\e R_2}(M_k^\e))}^2\Big)
\end{equation*}
holds, where  $C$ is a constant independent of the parameters $k$, $\e$, $\eta$ and the function $u$.
\end{lemma}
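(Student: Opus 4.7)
The proof starts by reducing to an $\e$-independent inequality via the inner change of variables $y := \e^{-1}\eta^{-1}(x - M_k^\e)$. In these coordinates, $\om_k^\e$ becomes the fixed-size cavity $\om_{k,\e}\subset B_{R_2}(0)$, and the balls $B_{\e R_2}(M_k^\e)$, $B_{\e R_3}(M_k^\e)$ become the large balls $B_{R_2/\eta}(0)$ and $B_\rho(0)$ with $\rho:=R_3/\eta$. Writing $v(y):=u(x)$ and tracking the Jacobians $(\e\eta)^n$ on volumes, $(\e\eta)^{n-1}$ on surfaces and $(\e\eta)^{-1}$ on gradients, the claim reduces to the $\e$-free inequality
$$
\|v\|_{L_2(\p\om_{k,\e})}^2 \leq C\bigl(\vk\,\|\nabla v\|_{L_2(B_\rho\setminus\om_{k,\e})}^2 + \eta^n\,\|v\|_{L_2(B_\rho\setminus B_{R_2/\eta})}^2\bigr),
$$
to be shown uniformly in $k\in\mathds{I}^\e$. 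The uniformity in $k$ is secured by Assumption~\ref{A1}, specifically by the uniform $C^1$-regularity of $\p\om_{k,\e}$ and the uniform two-sided bounds on the Jacobians of the local $(\tau,s)$-coordinates.

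Next I separate a constant contribution. Put $A := B_\rho \setminus B_{R_2/\eta}$ (so $|A| \asymp \eta^{-n}$), $c := |A|^{-1}\int_A v\,dy$ and $w := v - c$, a function with zero mean on $A$. Then $\|v\|_{L_2(\p\om_{k,\e})}^2 \leq 2\|w\|_{L_2(\p\om_{k,\e})}^2 + 2|c|^2 |\p\om_{k,\e}|$. Cauchy--Schwarz gives $|c|^2 \leq |A|^{-1} \|v\|_{L_2(A)}^2 \leq C\eta^n\|v\|_{L_2(A)}^2$, and $|\p\om_{k,\e}|$ is bounded uniformly by Assumption~\ref{A1}; this accounts for the second term of the target inequality and reduces the proof to bounding $\|w\|_{L_2(\p\om_{k,\e})}^2$ by $\vk\,\|\nabla w\|_{L_2(B_\rho\setminus\om_{k,\e})}^2$.

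For the mean-zero part, a standard trace inequality on the bounded shell $B_{R_2+1}\setminus\om_{k,\e}$ (uniform in $k$ by Assumption~\ref{A1}) reduces the task to proving $\|w\|_{L_2(B_{R_2+1}\setminus\om_{k,\e})}^2 \leq C\vk\,\|\nabla w\|_{L_2(B_\rho\setminus\om_{k,\e})}^2$. I pass to polar coordinates centered at the interior point $y_{k,\e}\in\om_{k,\e}$ furnished by Assumption~\ref{A1} and decompose $w$ into its spherical mean $\tilde w(r):=|S^{n-1}|^{-1}\int_{S^{n-1}} w(y_{k,\e}+r\theta)\,d\theta$ and its tangential remainder $w-\tilde w$. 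The latter has zero mean on each sphere and is controlled by the Poincar\'e inequality on $S^{n-1}$, giving a uniform bound by $\|\nabla w\|_{L_2(B_\rho\setminus\om_{k,\e})}^2$. For the radial part, the key computation is the weighted Cauchy--Schwarz estimate
$$
|\tilde w(r_1)-\tilde w(r_2)|^2 \leq \biggl|\int_{r_1}^{r_2} s^{1-n}\,ds\biggr|\int_{r_1}^{r_2} |\tilde w'(s)|^2\, s^{n-1}\,ds.
$$
The second factor is dominated by $\|\nabla w\|_{L_2(B_\rho\setminus\om_{k,\e})}^2$, while the first, $\int_{R_2+1}^{\rho} s^{1-n}\,ds$, is $O(1)$ for $n\geq 3$ and equals $\log(\rho/(R_2+1))=O(|\log\eta|)=O(\vk)$ for $n=2$. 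The zero-mean condition $\int_A w=0$ forces $\int_{R_2/\eta}^{\rho}\tilde w(r)\,r^{n-1}\,dr=0$, so by continuity there exists $r_*\in(R_2/\eta,\rho)$ with $\tilde w(r_*)=0$; choosing $r_2=r_*$ then yields $|\tilde w(r_1)|^2\leq C\vk\,\|\nabla w\|_{L_2(B_\rho\setminus\om_{k,\e})}^2$ for every $r_1\in(R_2, R_2+1)$, and integration in $r_1$ concludes.

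Combining all ingredients and undoing the scaling yields the stated estimate. The principal obstacle is the Poincar\'e-type estimate for the mean-zero part in dimension $n=2$: a naive Poincar\'e on $B_\rho\setminus\om_{k,\e}$ would give a constant of order $\rho^2=\eta^{-2}$, and only the weighted one-dimensional argument above---an incarnation of the logarithmic capacity of a small set in a large planar domain---extracts the sharp factor $\vk=|\log\eta|$. A secondary subtlety is the $k$-uniformity of all local constants (trace, spherical Poincar\'e, Jacobians of the polar change of variables), which is precisely what Assumption~\ref{A1} is designed to guarantee.
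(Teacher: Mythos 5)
First, note that the paper does not prove this statement itself: it is quoted verbatim as Lemma~3.6 of \cite{BK}, so your argument can only be judged on its own merits. Your overall architecture is the natural one and matches the spirit of the arguments in \cite{BK}: rescale by $\e\eta$, split off the mean $c$ of $v$ over the far annulus $A=B_{R_3/\eta}\setminus B_{R_2/\eta}$ (which produces the $\e^{-1}\eta^{n-1}$ term), use a trace inequality in the uniform tubular $(\tau,s)$-coordinates near $\p\om_{k,\e}$, and extract the factor $\vk$ from a one-dimensional radial estimate of logarithmic-capacity type. The radial computation, the choice of $r_*$ from the zero mean, and the bookkeeping of the scaling factors are all correct (up to the minor re-centering issue that $A$ is centered at the origin while your spheres are centered at $y_{k,\e}$, which is fixable by taking the mean over an annulus centered at $y_{k,\e}$ contained in $A$, and up to replacing the range $r_1\in(R_2,R_2+1)$ by $r_1\geqslant 2R_2$, since $\om_{k,\e}$ may reach out to distance $2R_2-R_1$ from $y_{k,\e}$).

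The genuine gap is in the near-cavity region. After the trace inequality you must bound $\|w\|_{L_2(B_{R_2+1}\setminus\om_{k,\e})}$, but your spherical-mean decomposition around $y_{k,\e}$ is only meaningful on full spheres contained in the domain, i.e.\ for radii $r\geqslant 2R_2$ (roughly); for $R_1<r<2R_2$ the spheres are cut by $\om_{k,\e}$, the exterior portion $S_r\setminus\om_{k,\e}$ can have arbitrary shape and need not even be connected, so neither the spherical mean nor the Poincar\'e inequality on $S^{n-1}$ with a uniform constant is available there --- and this is exactly the region the trace inequality feeds on. What is missing is a Poincar\'e-type inequality on the bounded set $B_{CR_2}(0)\setminus\om_{k,\e}$, comparing $w$ with its mean over a fixed outer shell, with a constant uniform over the whole family $\{\om_{k,\e}\}$; its proof must invoke the connectedness of $B_{R_2}(0)\setminus\om_{k,\e}$ and the uniform $(\tau,s)$-structure of Assumption~\ref{A1} (e.g.\ via uniform extension across $\p\om_{k,\e}$), and it cannot be waved through as a ``standard'' constant, because compactness arguments do not automatically give $k$-uniform constants for a varying family of domains. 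This near-cavity uniform estimate is precisely where the cited source \cite{BK} does the real work; once it is supplied, your outer radial/logarithmic argument correctly bridges the outer shell to the far annulus and the lemma follows.
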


Taking $u\equiv 1$ in the above lemma, we immediately find that the $(n-1)$-dimensional measures of $\p\om_{k,\e}$ are uniformly bounded:
\begin{equation}\label{3.9}
\mes_{n-1} \p\om_{k,\e}\leqslant C\e^{n-1}\eta^{n-1},
\end{equation}
where $C$ is a constant independent of $k$, $\e$ and $\eta$.

The next statement is Lemma~3.2 from \cite{JST22}.  Although in \cite{JST22} it was assumed that the boundaries of the domains  $\om_{k,\e}$ had the smoothness $C^2$, this fact was not used in the proof of Lemma~3.2 and this is why it is valid also under our Assumption~\ref{A1}.

\begin{lemma}\label{lm3.6}
Under Assumption~\ref{A1}  for all $k\in\mathds{I}^\e$ and all $u\in W_2^1(B_{\e R_3}(M_k^\e)\setminus\om_k^\e)$ the estimate
\begin{equation*}
\|u\|_{L_2(B_{\e\eta R_3}(M_k^\e)\setminus\om_k^\e)}^2 \leqslant C  \Big(\e^2\eta^2 \vk\|\nabla u\|_{L_2(B_{\e R_3}(M_k^\e)\setminus\om_k^\e)}^2 + \eta^n
\|u\|_{L_2(B_{\e R_3}(M_k^\e)\setminus\om_k^\e)}^2\Big)
\end{equation*}
holds   with a constant $C$ independent of $k$, $\e$ and $u$.
\end{lemma}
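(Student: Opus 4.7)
My plan is to derive Lemma~\ref{lm3.6} from Lemma~\ref{lm3.1} by integrating the latter over a one-parameter family of outer offsets of the cavity. The motivation is that the right-hand side of Lemma~\ref{lm3.1} already carries the combination $\e\eta\vk\|\nabla u\|^2+\e^{-1}\eta^{n-1}\|u\|^2$, while the target inequality differs only by one extra factor of $\e\eta$ in both terms; integrating a surface estimate over a family of parallel surfaces of total thickness $O(\e\eta)$ supplies precisely that factor via the coarea formula, without any loss of sharpness.

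Concretely, I would introduce the outer offsets
\[
\om_k^{\e,t}:=\{x\in\mathds{R}^d:\dist(x,\om_k^\e)<t\},\qquad t\in[0,t_*],\qquad t_*:=\e\eta(R_2+R_3-2R_1).
\]
The inclusion $\om_k^\e\supseteq B_{\e\eta R_1}(M_k^\e+\e\eta y_{k,\e})$ from Assumption~\ref{A1}, combined with $|y_{k,\e}|\leqslant R_2-R_1$, yields $\om_k^{\e,t_*}\supseteq B_{\e\eta R_3}(M_k^\e)$ and $\om_k^{\e,t}\subset B_{\e R_3}(M_k^\e)$ for $\eta$ small. The boundaries $\p\om_k^{\e,t}$ are $C^1$ parallel surfaces to $\p\om_k^\e$ that inherit the normal coordinates $(\tau,s)$ (shifted by $t$), so each $\om_k^{\e,t}$ obeys Assumption~\ref{A1} with constants uniform in $t$, possibly after subdividing $[0,t_*]$ into finitely many shells of thickness $\leqslant\e\eta\tau_0$. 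Applying Lemma~\ref{lm3.1} to $\om_k^{\e,t}$ in place of $\om_k^\e$ and monotonically enlarging the right-hand side to $B_{\e R_3}(M_k^\e)\setminus\om_k^\e$ yields, with $C$ independent of $t$,
\[
\|u\|_{L_2(\p\om_k^{\e,t})}^2 \leqslant C\bigl(\e\eta\vk\|\nabla u\|_{L_2(B_{\e R_3}(M_k^\e)\setminus\om_k^\e)}^2 + \e^{-1}\eta^{n-1}\|u\|_{L_2(B_{\e R_3}(M_k^\e)\setminus\om_k^\e)}^2\bigr).
\]

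I would then integrate in $t\in[0,t_*]$. By the coarea formula applied to the $1$-Lipschitz function $g(x):=\dist(x,\om_k^\e)$, whose gradient has unit norm almost everywhere, the left-hand side integrates to $\|u\|_{L_2(\om_k^{\e,t_*}\setminus\om_k^\e)}^2\geqslant\|u\|_{L_2(B_{\e\eta R_3}(M_k^\e)\setminus\om_k^\e)}^2$, while the $t$-independent right-hand side is simply multiplied by $t_*=O(\e\eta)$, producing exactly the factors $\e^2\eta^2\vk$ and $\eta^n$ required by the lemma. The principal obstacle is justifying that Lemma~\ref{lm3.1} applies uniformly to all offset cavities, i.e., that the parallel surfaces retain a $C^1$-regular structure with a uniform normal coordinate chart and Jacobians bounded independently of $t,k,\e$; this is the only point where the full smoothness content of Assumption~\ref{A1} is essentially invoked, and it is what forces the subdivision of $[0,t_*]$ into shells of thickness $\leqslant\e\eta\tau_0$ whenever the natural step size $t_*$ exceeds the available $\tau_0$-layer.
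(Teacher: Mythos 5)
Your scaling bookkeeping is right: multiplying the right-hand side of Lemma~\ref{lm3.1} by a thickness $t_*\sim\e\eta$ does turn $\e\eta\vk$ and $\e^{-1}\eta^{n-1}$ into $\e^2\eta^2\vk$ and $\eta^n$, and the coarea identity $\int_0^{t_*}\|u\|_{L_2(\{\dist(\cdot,\om_k^\e)=t\})}^2\,dt=\|u\|_{L_2(\om_k^{\e,t_*}\setminus\om_k^\e)}^2$ is unobjectionable. The genuine gap is the step you yourself flag and then dismiss: you apply Lemma~\ref{lm3.1} as a black box to every offset cavity $\om_k^{\e,t}$, but nothing in Assumption~\ref{A1} guarantees that these offsets satisfy Assumption~\ref{A1} with constants uniform in $t$, and the constant in Lemma~\ref{lm3.1} depends precisely on those geometric data. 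Concretely: (i) Assumption~\ref{A1} provides a normal-coordinate collar only of thickness $\tau_0$ around $\p\om_{k,\e}$ (i.e.\ $\e\eta\tau_0$ around $\p\om_k^\e$), while your offsets run up to $t_*=\e\eta(R_2+R_3-2R_1)$, which generically far exceeds $\e\eta\tau_0$; beyond the collar the level sets of $x\mapsto\dist(x,\om_k^\e)$ are merely level sets of a Lipschitz function, with no $C^1$ regularity, no collar, and no uniform Jacobian bounds. Your proposed remedy --- subdividing $[0,t_*]$ into shells of thickness $\leqslant\e\eta\tau_0$ --- does not repair this, because the base surface of every shell after the first is itself an uncontrolled offset surface, and Assumption~\ref{A1} gives no collar around it. (ii) Even inside the collar, the parallel surfaces $\{\tau=-t\}$ of a $C^1$ boundary with a bi-Lipschitz collar are in general only Lipschitz; they are not asserted to be $C^1$, and the offset domain does not automatically carry its own uniform normal chart, which is what the proof of Lemma~\ref{lm3.1} in \cite{BK} actually uses. (iii) Side conditions of Assumption~\ref{A1} can also fail for offsets: after rescaling by $(\e\eta)^{-1}$ the offset cavity is only contained in a ball of radius $2R_2+R_3-2R_1>R_3$, and the complement of an outer neighborhood of a connected cavity need not remain connected (an offset can trap pockets of the exterior). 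Establishing Lemma~\ref{lm3.1} uniformly for the whole offset family is therefore not a routine verification; it is essentially of the same difficulty as proving the stated estimate directly.

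Note also that the paper itself does not reprove this lemma: it imports it as Lemma~3.2 of \cite{JST22} (remarking only that the $C^2$ smoothness assumed there is not used), where it is obtained by rescaling to the unit cell and direct one-dimensional integration rather than by integrating a trace inequality over parallel surfaces. If you want a self-contained argument, that rescaling route avoids the offset-regularity problem entirely and is the natural way to close your gap.
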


The next lemma is \cite[Lm. 3.5]{BK}.

\begin{lemma}\label{lm3.3}
Under  Assumption~\ref{A1}
for all $k\in\mathds{I}^\e$ and all $u\in W_2^1(B_{\e\eta R_3}(M_k^\e)\setminus\om_k^\e)$ obeying the identity
\begin{equation}\label{3.1}
\int\limits_{B_{\e\eta  R_3}(M_k^\e)\setminus\om_k^\e} u(x)\,dx=0
\end{equation}
the estimate
\begin{align}
&\|u\|_{L_2(\p\om_k^\e)}^2\leqslant C\e\eta \|\nabla u\|_{L_2(B_{\e\eta R_3}(M_k^\e)\setminus\om_k^\e)}^2,\label{3.2c}
\end{align}
holds, where $C$ is a constant independent of the parameters $k$, $\e$, $\eta$ and the function $u$.
\end{lemma}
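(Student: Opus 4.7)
The plan is to rescale the estimate to a reference scale of size $O(1)$ and then reduce the claim to a parameter-free trace–Poincaré inequality on the rescaled domains. Introduce the change of variables $y=(\e\eta)^{-1}(x-M_k^\e)$ and set $v(y):=u(M_k^\e+\e\eta y)$. This maps $B_{\e\eta R_3}(M_k^\e)\setminus\om_k^\e$ onto $\Om_{k,\e}:=B_{R_3}(0)\setminus\om_{k,\e}$ and $\p\om_k^\e$ onto $\p\om_{k,\e}$, transforms the mean-zero condition \eqref{3.1} into $\int_{\Om_{k,\e}} v\,dy=0$, and the standard scaling of $L_2$-norms gives
\begin{equation*}
\|u\|_{L_2(\p\om_k^\e)}^2=(\e\eta)^{n-1}\|v\|_{L_2(\p\om_{k,\e})}^2, \qquad
\|\nabla_x u\|_{L_2(B_{\e\eta R_3}(M_k^\e)\setminus\om_k^\e)}^2=(\e\eta)^{n-2}\|\nabla_y v\|_{L_2(\Om_{k,\e})}^2.
\end{equation*}
After substitution, \eqref{3.2c} becomes equivalent to the $\e$-free inequality $\|v\|_{L_2(\p\om_{k,\e})}^2 \leqslant C\|\nabla v\|_{L_2(\Om_{k,\e})}^2$ for every $v\in W_2^1(\Om_{k,\e})$ with vanishing mean, with constant uniform in $k$ and $\e$.

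I would derive this rescaled inequality by combining two ingredients. The first is a standard trace inequality $\|v\|_{L_2(\p\om_{k,\e})}^2 \leqslant C(\|\nabla v\|_{L_2(\Om_{k,\e})}^2+\|v\|_{L_2(\Om_{k,\e})}^2)$, proved by integrating the identity $|v(0,s)|^2=|v(\tau_0,s)|^2-2\RE\int_0^{\tau_0}v\,\p_\tau\bar v\,d\tau$ over $s\in\p\om_{k,\e}$ in the uniform tubular neighborhood guaranteed by Assumption~\ref{A1}; uniformity of the constant follows from the uniform bounds on the Jacobians of $x\mapsto(\tau,s)$. The second is a uniform mean-zero Poincaré inequality $\|v\|_{L_2(\Om_{k,\e})}\leqslant C\|\nabla v\|_{L_2(\Om_{k,\e})}$. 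Adding these two gives the rescaled claim, and the scaling identities then restore precisely the factor $\e\eta$ demanded by \eqref{3.2c}.

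The main obstacle is the uniformity of the Poincaré constant over the family $\{\Om_{k,\e}\}$, since the shapes $\om_{k,\e}$ may vary arbitrarily with $k$ and $\e$. I would treat this by a compactness/contradiction argument: if the Poincaré constant blew up along a sequence $\om_{k_m,\e_m}$, the uniform inclusion $B_{R_1}(y_{k,\e})\subseteq\om_{k,\e}\subseteq B_{R_2}(0)$ together with the uniform $C^1$-regularity built into Assumption~\ref{A1} suffices, via Hausdorff convergence of the domains and Rellich compactness, to extract a limiting Lipschitz domain $\Om_\infty\subset B_{R_3}(0)$ on which the standard mean-zero Poincaré inequality would fail, yielding a contradiction. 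A more hands-on alternative exploits the fact that each $\Om_{k,\e}$ contains the fixed spherical shell $\{R_2<|y|<R_3\}$, on which a Poincaré inequality with constant depending only on $R_2,R_3$ is immediate, and then propagates this control across the rest of $\Om_{k,\e}$ by a short chaining argument along overlapping balls covered by $R_1,R_2,R_3$; this avoids compactness entirely and may be closer to the route actually taken in \cite{BK}.
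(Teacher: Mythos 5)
A preliminary remark: the paper itself does not prove this lemma --- it is imported verbatim as Lemma~3.5 of \cite{BK} --- so the comparison is with that cited source rather than with an in-paper argument. Your rescaling $y=(\e\eta)^{-1}(x-M_k^\e)$, the resulting scale-free reformulation $\|v\|_{L_2(\p\om_{k,\e})}^2\leqslant C\|\nabla v\|_{L_2(B_{R_3}(0)\setminus\om_{k,\e})}^2$ for mean-zero $v$, and the splitting into a uniform trace inequality plus a uniform mean-zero Poincar\'e inequality are correct and are certainly the natural route; the factor $\e\eta$ comes out exactly as you computed. Two small slips in the trace step: with the paper's convention $\tau$ points \emph{into} $\om_{k,\e}$, so your identity integrates through points where $v$ is not defined --- you must work on the exterior side of the collar (which Assumption~\ref{A1} does place inside $B_{R_2}(0)$, hence inside the rescaled domain) --- and the term $|v(\tau_0,s)|^2$ must still be averaged in $\tau$ to be controlled by a volume norm. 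Both are routine to fix.

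The genuine gap is in the step you yourself flag as the main obstacle: the $k$- and $\e$-uniform mean-zero Poincar\'e inequality on $B_{R_3}(0)\setminus\om_{k,\e}$, which is the entire content of the lemma, is left as a sketch that does not go through as stated. The compactness variant is inadequate: plain Hausdorff convergence of the $\om_{k_m,\e_m}$ guarantees neither a nondegenerate limit domain nor any semicontinuity of Poincar\'e constants, and Rellich compactness on a varying family requires uniformly bounded extension operators --- all of this has to be extracted from the quantitative content of Assumption~\ref{A1} (the two-sided collar of width $\tau_0$ with bounded Jacobians, i.e.\ positive reach, yielding uniform local graph/Lipschitz charts), which your sketch does not do. The chaining variant is closer to a workable argument, but propagating the shell estimate across $B_{R_2}(0)\setminus\om_{k,\e}$ with constants independent of the shape is precisely the difficulty, and no chain construction is given. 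Most tellingly, your argument never uses the hypothesis in Assumption~\ref{A1} that $B_{R_2}(0)\setminus\om_{k,\e}$ is connected, yet without it estimate \eqref{3.2c} is simply false: if $\om_{k,\e}$ is, say, a thickened sphere enclosing an inner cavity (which is compatible with all the other requirements of \ref{A1}), a function equal to distinct constants on the two components of $B_{R_3}(0)\setminus\om_{k,\e}$, normalized to zero mean, has vanishing gradient but nonzero trace on $\p\om_{k,\e}$. Since connectedness must enter the proof --- and enter quantitatively, via the collar bound that forbids arbitrarily thin necks --- an argument that nowhere invokes it cannot be complete; this is the concrete point to repair before your proposal matches \cite[Lm.~3.5]{BK}.
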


\begin{lemma}\label{lm3.4}
For all $u\in W_2^2(B_{\e R_3}(M_k^\e))$ the estimate
\begin{equation*}
\|u\|_{L_2(\p\om_k^\e)}^2\leqslant C (\e^{-1}\eta^{n-1}+\kappa^2\e^3\eta^3) \|u\|_{W_2^2(B_{R_3\e}(M_k^\e))}^2
\end{equation*}
holds true, where  $C$ is a constant independent of the parameters $k$, $\e$, $\eta$ and the function $u$.
\end{lemma}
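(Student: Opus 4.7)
The idea is to decompose $u = p + w$ on the ball $B_{\e R_3}(M_k^\e)$, where $p(x) = c_0 + c_1\cdot(x-M_k^\e)$ is an affine polynomial chosen so that the remainder $w := u-p$ has vanishing mean and vanishing gradient-mean on the annular region $B' := B_{\e\eta R_3}(M_k^\e)\setminus\om_k^\e$. Explicitly, take $c_1 := |B'|^{-1}\int_{B'}\nabla u\,dy$ and $c_0 := |B'|^{-1}\int_{B'}(u - c_1\cdot(y-M_k^\e))\,dy$; then $\int_{B'}w = 0$ and $\int_{B'}\nabla w = 0$. The plan is to treat $p$ and $w$ separately, using Lemma~\ref{lm3.3} for $w$ and a two-scale (``macroscopic mean plus fluctuation'') estimate for $p$.

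For the remainder $w$, because $\nabla w = \nabla u - c_1$ has zero mean on $B'$ and because the rescaled domain $B_{R_3}\setminus\om_{k,\e}$ is of a uniformly controlled shape by Assumption~\ref{A1}, the Poincaré--Wirtinger inequality yields $\|\nabla w\|_{L_2(B')}^2 \leqslant C(\e\eta)^2\|\nabla^2 u\|_{L_2(B')}^2$. Applying Lemma~\ref{lm3.3} to $w$ gives $\|w\|_{L_2(\p\om_k^\e)}^2 \leqslant C\e\eta\|\nabla w\|_{L_2(B')}^2 \leqslant C\e^3\eta^3\|\nabla^2 u\|_{L_2(B_{\e R_3})}^2$, which fits under the $\kappa^2\e^3\eta^3$-term in every dimension after comparing the universal constant with the $\e^{-1}\eta^{n-1}$-term.

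For the affine part $p$, since $|x-M_k^\e|\leqslant C\e\eta$ on $\p\om_k^\e$, one has $\|p\|_{L_2(\p\om_k^\e)}^2 \leqslant C\e^{n-1}\eta^{n-1}\big(|c_0|^2 + \e^2\eta^2|c_1|^2\big)$, so everything reduces to bounding $|c_0|^2$ and $|c_1|^2$. The plan is to write $c_j$ as the sum of the macroscopic mean over the whole ball $B_{\e R_3}$ plus a fluctuation: for $c_1$, the macroscopic part $\overline{\nabla u}$ obeys $|\overline{\nabla u}|^2 \leqslant C\e^{-n}\|\nabla u\|_{L_2(B_{\e R_3})}^2$, while the fluctuation is the mean of the zero-mean function $\nabla u - \overline{\nabla u}$ over $B'$, controlled by Cauchy--Schwarz together with Lemma~\ref{lm3.6} applied to $\nabla u - \overline{\nabla u}$, giving $|c_1 - \overline{\nabla u}|^2 \leqslant C\e^{2-n}\eta^{2-n}\vk\|\nabla^2 u\|^2$. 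Multiplied by $\e^{n+1}\eta^{n+1}$, these contributions become $C\e\eta^{n+1}\|\nabla u\|^2 + C\e^3\eta^3\vk\|\nabla^2 u\|^2$, both absorbable. A similar split for $c_0$, based on the three-term decomposition $u = \bar u + \overline{\nabla u}\cdot(x-M_k^\e) + \tilde u$ (with $\tilde u$ having two vanishing moments on $B_{\e R_3}$ and hence $\|\tilde u\|^2 \leqslant C\e^4\|\nabla^2 u\|^2$), handles the $|c_0|$-part analogously.

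The principal technical hurdle will be showing that the fluctuation estimate for $|c_0|$ does \emph{not} introduce a spurious term of size $\e\eta\vk\|\nabla u\|^2$, which would be incompatible with the claimed bound in the intermediate regime where $\kappa^2 = 0$. This is precisely where the third term $\tilde u$ in the decomposition of $u$ is essential: bounding $\|\tilde u\|_{L_2(B')}^2$ via Lemma~\ref{lm3.6} and using $\|\nabla\tilde u\|^2\leqslant C\e^2\|\nabla^2 u\|^2$ converts the problematic $\|\nabla u\|$-contribution into a $\|\nabla^2 u\|$-contribution of the correct order, after which the relation $\vk\leqslant C(1+\kappa^2)$ valid in each dimensional regime completes the argument.
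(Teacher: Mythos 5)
Your decomposition handles the fluctuating parts correctly (the $w$-part via Lemma~\ref{lm3.3} plus a uniform Poincar\'e inequality, and the $c_1$-part, both land at $\e^3\eta^3\vk\|u\|_{W_2^2}^2$, which is absorbable), but the treatment of the constant $c_0$ --- i.e.\ of the mean of $u$ over the small set $B'=B_{\e\eta R_3}(M_k^\e)\setminus\om_k^\e$ --- has a genuine gap, and this is exactly the hard point of the lemma. Your fluctuation bound for $c_0$ is Cauchy--Schwarz on $B'$ followed by Lemma~\ref{lm3.6} applied to $\tilde u$: this gives $|c_0-\bar u|^2\leqslant C(\e\eta)^{-n}\|\tilde u\|_{L_2(B')}^2\leqslant C(\e\eta)^{-n}\,\e^2\eta^2\vk\,\e^2\|\nabla^2u\|^2$, and after multiplying by the surface factor $\e^{n-1}\eta^{n-1}$ you obtain a contribution of order $\e^3\eta\,\vk\,\|u\|_{W_2^2}^2$, not $\kappa^2\e^3\eta^3$. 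The missing factor $\eta^2$ is not cosmetic: since (\ref{2.10}) allows $\eta$ to tend to zero arbitrarily fast relative to $\e$ (take $\mu=0$), e.g.\ $\eta\leqslant\e^4$, the term $\e^3\eta\vk$ dominates both $\e^{-1}\eta^{n-1}$ and $\kappa^2\e^3\eta^3$, so the claimed estimate does not follow. Your closing inequality ``$\vk\leqslant C(1+\kappa^2)$'' is also false for $n=2$ (there $\vk=|\ln\eta|\to\infty$ while $\kappa=0$), though that is a secondary issue; the real obstruction is that Cauchy--Schwarz on the tiny set $B'$ cannot see the extra smallness of the average of $u$ there.

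The paper gets around precisely this point by a different device. For $n=2,3$ it does not decompose at all: the embedding $W_2^2\hookrightarrow C$ on the rescaled ball plus the surface-measure bound (\ref{3.9}) give the pure $\e^{-1}\eta^{n-1}$ estimate. For $n\geqslant4$ it rewrites $\int_{B_{\e\eta R_3}}u\,dx=\int_{B_{\e R_3}}u\,\D X\,dx$ with the explicit potential $X$ ($\D X=1$ in $B_{\e\eta R_3}$, harmonic in the annulus, $X=0$ on $\p B_{\e R_3}$) and integrates by parts twice, so that the mean is expressed through a boundary integral over the \emph{outer} sphere $\p B_{\e R_3}$ (no smallness problem there) plus $\int X\,\D u$, where $\|X\|_{L_2}\sim(\e\eta)^{(n+4)/2}$ (with $|\ln\eta|^{1/2}$ for $n=4$). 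It is this decay of $X$ that produces the $\kappa^2\e^3\eta^3$ term; a direct $L_2$-estimate of $u$ on the small ball, which is what your plan uses, is off by the factor $\eta^{-2}$ identified above. To repair your argument you would need to replace the Cauchy--Schwarz step for $|c_0|$ by such a potential-theoretic (or equivalent capacity-type) argument, and treat $n=2,3$ separately, since there the target contains no $\e^3\eta^3$ term at all.
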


\begin{proof} Let $u\in W_2^2(B_{\e R_3}(M_k^\e))$ be a given function.
We first consider the case $n=2,3$. We introduce one more function $\tilde{u}(\xi):=u(M_k^\e+\e\xi)$. It belongs to $W_2^2(B_{R_3}(0))$ and hence, due to the embedding of the latter space into $C(\overline{B_{R_3}(0)})$,
\begin{equation*}
\|\tilde{u}\|_{C(\overline{B_{R_3}(0)})}\leqslant C \|\tilde{u}\|_{W_2^2(B_{R_3}(0))};
\end{equation*}
throughout the proof the symbol $C$ stands for various inessential constants independent of $k$, $\e$, $\eta$, $u$. Returning back to the function $u$, we immediately obtain one more estimate
\begin{equation*}
\|u\|_{C(\overline{B_{R_3\e}(M_k^\e)})}^2\leqslant C\e^{-n} \|u\|_{W_2^2(B_{R_3\e}(M_k^\e))}^2.
\end{equation*}
Hence, in view of  (\ref{3.9}),
\begin{equation*}
\|u\|_{L_2(\p\om_k^\e)}^2\leqslant C \e^{-1}\eta^{n-1} \|u\|_{W_2^2(B_{R_3\e}(M_k^\e))}^2,
\end{equation*}
and this proves the desired estimate for $n=2,3$.

We proceed to the case $n\geqslant 4$. We denote
\begin{equation*}
\la u\ra:=\frac{1}{\mes B_{\e\eta R_3}} \int\limits_{B_{\e\eta R_3}} u\,dx,\qquad u_\bot:=u-\la u\ra,\qquad u^\bot(\xi):=u_\bot(M_k^\e+\e\eta\xi).
\end{equation*}
We obviously have:
\begin{equation}\label{3.8}
\|u\|_{L_2(\p\om_k^\e)}^2\leqslant C (\e\eta)^{-n-1} \Bigg|\int\limits_{B_{\e\eta R_3}} u\,dx\Bigg|^2
+ C \|u_\bot\|_{L_2(\p\om_k^\e)}^2.
\end{equation}
 The function $u^\bot$ belongs to $W_2^1(B_{R_3}(0))$ and satisfies the identity
\begin{equation*}
\int\limits_{B_{R_3}(0)} u^\bot\,d\xi=0.
\end{equation*}
Then, by \cite[Lm. 3.4]{BK},
\begin{equation*}
\|u^\bot\|_{L_2(\p\om_{k,\e})} \leqslant C \|\nabla u^\bot\|_{L_2(B_{R_3}(0))}.
\end{equation*}
Returning back to the function $u_\bot$ and using Lemma~\ref{lm3.6}, we obtain:
\begin{equation}\label{3.4}
\|u_\bot\|_{L_2(\p\om_k^\e)}^2\leqslant C \e\eta \|\nabla u\|_{L_2(B_{\e R_3}(M_k^\e))}^2\leqslant
C \big(\e^3\eta^3  + \e\eta^{n+1}\big)\|u\|_{W_2^2(B_{\e R_3}(M_k^\e))}^2.
\end{equation}

We introduce an auxiliary function:
\begin{equation*}
X(x):=\left\{
\begin{aligned}
& \frac{|x|^2-R_3^2\e^2\eta^2}{2n} + \frac{R_3^2\e^2\eta^n}{n(2-n)} (\eta^{-n+2}-1)  && \text{in}
\quad B_{R_3\e\eta}(0),
\\
&  \frac{R_3^2\e^2\eta^n}{n(2-n)}   \left(\left(\frac{R_3\e}{|x|}\right)^{n-2}-1\right) &&
 \text{in}
\quad B_{R_\e \e}(0)\setminus B_{R_3\e\eta}(0).
\end{aligned}\right.
\end{equation*}
This function solves the following boundary value problem:
\begin{equation*}
\D X=1\quad\text{in}\quad B_{\e\eta R_3}(0),\qquad \D X=0\quad\text{in}\quad B_{R_3\e}(0)\setminus B_{R_3\e\eta}(0),\qquad X=0\quad\text{on}\quad \p B_{\e R_3}(0).
\end{equation*}
Using this problem, we integrate by parts as follows:
\begin{align*}
\int\limits_{B_{R_3\e\eta}(M_k^\e)} u(x)\,dx=&\int\limits_{B_{R_3\e}(M_k^\e)} u(x)\D X(x-M_k^\e)\,dx
\\
=& \int\limits_{\p B_{R_3\e}(M_k^\e)} u(x)\frac{\p X}{\p|x|}(x-M_k^\e)\,ds + \int\limits_{B_{R_3\e}(M_k^\e)} X(x-M_k^\e)\D u(x)\,dx.
\end{align*}
The explicit formula for $X$ and Cauchy-Schwarz inequality then yield:
\begin{equation}\label{3.5}
\begin{aligned}
\Bigg|\int\limits_{B_{R_3\e\eta}(M_k^\e)} u\,dx\Bigg|^2 \leqslant C\Big(\e^{n+1}\eta^{2n} \|u\|_{L_2(\p B_{R_3\e}(M_k^\e))}^2 + \|X\|_{L_2(B_{R_3\e}(0))}^2 \|u\|_{W_2^2(M_k^\e))}^2\Big).
\end{aligned}
\end{equation}

The function $\tilde{u}(\xi):=u(M_k^\e+\e\xi)$ belongs to $W_2^1(B_{R_3}(0))$  and satisfies the estimate
\begin{equation*}
\|\tilde{u}\|_{L_2(\p B_{R_3}(0))}^2\leqslant C \|\tilde{u}\|_{W_2^1(B_{R_3}(0))}^2.
\end{equation*}
Returning back to the function $u$, we then find:
\begin{equation}\label{3.7}
\|u\|_{L_2(\p B_{\e R_3}(0))}^2\leqslant C \e^{-1} \|u\|_{W_2^1(B_{\e R_3}(0))}^2.
\end{equation}
Using the explicit formula for $X$ once again, we calculate the $L_2(B_{R_3\e}(M_k^\e))$-norm of $X$ and we get:
\begin{equation*}
\|X\|_{L_2(B_{R_3\e}(0))}^2 \leqslant C (\e\eta)^{n+4}, \quad n\geqslant 5, \qquad
\|X\|_{L_2(B_{R_3\e}(0))}^2 \leqslant C (\e\eta)^8 |\ln\eta|, \quad n=4.
\end{equation*}
We substitute this estimate and (\ref{3.7}) into (\ref{3.5}) and then the resulting inequality is combined with (\ref{3.4}), (\ref{3.8}). This gives the desired inequality and completes the proof.
\end{proof}

Let us prove the unique solvability of problems (\ref{2.7}), (\ref{2.11}). In order to do this, we follow   general results from the theory of monotone operators, see \cite[Ch. 1, Sect. 1.2$^0$]{Dub},
\cite[Ch. V\!I, Sect. 18.4]{Vain}. According to these results,  in  our case the unique solvability is ensured by the following conditions:
\begin{enumerate}
\item\label{1} For all $u,v,w\in \mathfrak{V}$ the function $t\mapsto\hf^\e(u+t  v,w)-\l(u+t v,w)_{L_2(\Om^\e;\mathds{C}^n)}$ is continuous;
\item\label{2} For all $u,v\in \mathfrak{V}$ the inequality $\RE\big(\hf^\e(u-v,u-v)-\l\|u-v\|_{L_2(\Om^\e;\mathds{C}^n)}^2\big)>0$ holds;
\item\label{3} The convergence is valid:
\begin{equation*}
\frac{\RE\big(\hf^\e(u,u)-\l\|u\|_{L_2(\Om^\e;\mathds{C}^n)}^2\big)} {\|u\|_{W_2^1(\Om^\e;\mathds{C}^n)}}\to+\infty\quad\text{as}\quad \|u\|_{W_2^1(\Om^\e;\mathds{C}^n)}\to+\infty.
\end{equation*}
\end{enumerate}
We are going to check all these three conditions for the form $\hf^\e$
as well as the same conditions for the form
\begin{equation*}
\hf^0(u,v):=\sum\limits_{i,j=1}^{d}\left(A_{ij}\frac{\p u}{\p x_j},\frac{\p v}{\p x_i}\right)_{L_2(\Om;\mathds{C}^n)} + \sum\limits_{j=1}^{n} \left(A_j \frac{\p u}{\p x_j}, v\right)_{L_2(\Om;\mathds{C}^n)} + (A_0 u,v)_{L_2(\Om;\mathds{C}^n)}.
\end{equation*}
This will prove the unique solvability of problems (\ref{2.7}), (\ref{2.11}).

Condition \ref{1} obviously holds true for both forms $\hf^\e$ and $\hf^0$. By conditions (\ref{2.5}), (\ref{2.5a}) and the Cauchy-Schwartz inequality we immediately get:
\begin{align*}
&\RE \hf(u,u)\geqslant (c_1-\d) \|\nabla u\|_{L_2(\Om^\e;\mathds{C}^n)}^2
-C\d^{-1} \|u\|_{L_2(\Om^\e;\mathds{C}^n)}^2 &&\hspace{-1.5 true cm} \text{for all}\quad u\in W_2^1(\Om^\e;\mathds{C}^n),
\\
&\RE \hf^0(u,u)\geqslant (c_1-\d) \|\nabla u\|_{L_2(\Om;\mathds{C}^n)}^2
-C\d^{-1} \|u\|_{L_2(\Om;\mathds{C}^n)}^2 &&\hspace{-1.5 true cm} \text{for all}\quad u\in W_2^1(\Om;\mathds{C}^n),
\end{align*}
where $\d>0$ is arbitrary and fixed, while $C>0$ is some constant independent of $\d$ and $u$. Choosing then $\d:=c_1/2$ and $\l_0:=1+2C/c_1$, we obtain the estimate
\begin{equation}\label{4.14}
\RE\big(\hf^0(u,u)-\l \|u\|_{L_2(\Om;\mathds{C}^n)}^2\big)\geqslant \frac{c_1}{2}\|\nabla u\|_{L_2(\Om;\mathds{C}^n)}^2 + \|u\|_{L_2(\Om;\mathds{C}^n)}^2
\end{equation}
for $\l\in\mathds{C}$ such that $\RE\l\leqslant \l_0$. This estimate implies the validity of Conditions~\ref{2},~\ref{3} for the form $\hf^0$.

It follows from (\ref{2.6}) that
\begin{align*}
&\big|\big(a^\e(\,\cdot\,,u)-a^\e(\,\cdot\,,v), u-v\big)_{L_2(\p\tht^\e;\mathds{C}^n)}\big| \leqslant \mu \|u-v\|_{L_2(\p\tht^\e;\mathds{C}^n)}^2=\mu \sum\limits_{k\in\mathds{I}^\e} \|u-v\|_{L_2(\p\om_k^\e;\mathds{C}^n)}^2,
\\
&\big|\big(a^\e(\,\cdot\,,u),u\big)_{L_2(\p\tht^\e;\mathds{C}^n)}\big| \leqslant \mu \|u\|_{L_2(\p\tht^\e;\mathds{C}^n)}^2= \mu \sum\limits_{k\in\mathds{I}^\e} \|u\|_{L_2(\p\om_k^\e;\mathds{C}^n)}^2.
\end{align*}
Applying Lemma~\ref{lm3.1}, we find:
\begin{align*}
&\big|\big(a^\e(\,\cdot\,,u)-a^\e(\,\cdot\,,v), u-v\big)_{L_2(\p\tht^\e;\mathds{C}^n)}\big| \leqslant C\big(\e\eta\vk+\e^{-1}\eta^{n-1}\big)\mu \|u-v\|_{L_2(\p\Om^\e;\mathds{C}^n)}^2,
\\
&\big|\big(a^\e(\,\cdot\,,u),u\big)_{L_2(\p\tht^\e;\mathds{C}^n)}\big| \leqslant C\big(\e\eta\vk+\e^{-1}\eta^{n-1}\big)\mu \|u\|_{L_2(\p\Om^\e;\mathds{C}^n)}^2,
\end{align*}
where $C$ is some constant independent of $\e$, $\mu$ and $u$.
Taking into consideration the first convergence in (\ref{2.10}), by the above estimates and (\ref{4.14}) with $\d=3c_1/4$ we get:
\begin{align}
&\RE\big(\hf^\e(u-v,u-v)-\l\|u-v\|_{L_2(\Om^\e;\mathds{C}^n)}^2\big) \geqslant \frac{c_1}{2} \|\nabla(u-v)\|_{L_2(\Om^\e;\mathds{C}^n)}^2+
\|u-v\|_{L_2(\Om^\e;\mathds{C}^n)}^2,\nonumber
\\
&\RE\big(\hf^\e(u,u)-\l\|u\|_{L_2(\Om^\e;\mathds{C}^n)}^2\big) \geqslant \frac{c_1}{2} \|\nabla u\|_{L_2(\Om^\e;\mathds{C}^n)}^2+
\|u\|_{L_2(\Om^\e;\mathds{C}^n)}^2,\label{3.6}
\end{align}
for all $u,v\in W_2^1(\Om^\e;\mathds{C}^n)$ and $\l\in\mathds{C}$ such that $\RE\l\leqslant \l_0$, where $\l_0$ is the same as above. These two estimates imply Conditions~\ref{2},~\ref{3} for the form $\hf^\e$. Hence, the problems (\ref{2.7}), (\ref{2.11}) are uniquely solvable for all $f\in L_2(\Om;\mathds{C}^n)$. Estimate (\ref{3.6}) also yields
\begin{equation}\label{2.12}
\|u_\e\|_{W_2^1(\Om^\e;\mathds{C}^n)}\leqslant C \|f\|_{L_2(\Om^\e;\mathds{C}^n)},
\end{equation}
where $C$ is some constant independent of $\e$ and $f$. By (\ref{4.14}) we get a similar estimate for $\|u_0\|_{W_2^1(\Om;\mathds{C}^n)}$. Using then standard smoothness improving theorems, we obtain:
\begin{equation}
\|u_0\|_{W_2^2(\Om;\mathds{C}^n)}\leqslant C(\l)\|f\|_{L_2(\Om;\mathds{C}^n)},\label{2.15}
\end{equation}
with some constant $C$ independent of $f$.

We proceed to proving inequality (\ref{2.16}). Given an arbitrary $f\in L_2(\Om)$, we let $v_\e:=u_\e- u_0$, where
$u_\e$ and $u_0$ are the solutions of problems (\ref{2.7}), (\ref{2.11}). The function $v_\e$ is an element of $W_2^1(\Om^\e;\mathds{C}^n)$ and has the zero trace on $\p\Om$. Integral identity (\ref{2.18}) with $v_\e$ as the test function reads
\begin{equation}\label{4.12}
\hf^\e(u_\e,v_\e)-\l(u_\e,v_\e)_{L_2(\Om^\e;\mathds{C}^n)} =(f,v_\e)_{L_2(\Om^\e;\mathds{C}^n)}.
\end{equation}
Since $u_0$ belongs to $W_2^2(\Om;\mathds{C}^n)$, we can
multiply the equation in (\ref{2.11}) by $v_\e$
and integrate by parts over $\Om^\e$. This gives:
\begin{equation}\label{4.13}
\hf(u_0,v_\e)-\left(\frac{\p u_0}{\p\boldsymbol{\nu}},v_\e\right)_{L_2(\p\tht^\e;\mathds{C}^n)} -\l(u_0,v_\e)_{L_2(\Om^\e;\mathds{C}^n)} =(f,v_\e)_{L_2(\Om^\e;\mathds{C}^n)}.
\end{equation}
Calculating the difference of this identity with (\ref{4.12}), we obtain:
\begin{equation}\label{4.1}
\hf^\e(v_\e,v_\e)-\l\|v_\e\|_{L_2(\Om^\e;\mathds{C}^n)}^2=g_\e,
\qquad g_\e:=\big(a^\e(\,\cdot\,,v_\e) -a^\e(\,\cdot\,,u_\e),v_\e\big)_{L_2(\p\tht^\e;\mathds{C}^n)}
-\left(\frac{\p u_0}{\p\boldsymbol{\nu}},v_\e\right)_{L_2(\p\tht^\e;\mathds{C}^n)}.
\end{equation}
By inequality (\ref{3.6}) we have
\begin{equation}\label{4.2}
\RE \Big(\hf^\e(v_\e,v_\e)-\l\|v_\e\|_{L_2(\Om^\e;\mathds{C}^n)}^2\Big)\geqslant
C\|v_\e\|_{W_2^1(\Om^\e;\mathds{C}^n)}^2.
\end{equation}
Hereinafter, by $C$ we denote various inessential constants independent of $f$, $u_0$, $u_\e$, $v_\e$, $\e$, $k$ and spatial variables but, in general, depending on $\l$.

Our next step is to estimate the real part of the right hand side in (\ref{4.1}). It follows from the second inequality in (\ref{2.6}) that
\begin{equation*}
\big|a^\e(\,\cdot\,,v_\e) -a^\e(\,\cdot\,,u_\e)\big|\leqslant \mu(\e)|v_\e-u_\e|=\mu(\e)|u_0|.
\end{equation*}
Hence, by Lemmata~\ref{lm3.1},~\ref{lm3.4} and estimate (\ref{2.15}),
\begin{equation}\label{4.16}
\begin{aligned}
\big|\big(a^\e(\,\cdot\,,v_\e) -a^\e(\,\cdot\,,u_\e),v_\e\big)_{L_2(\p\tht^\e;\mathds{C}^n)}\big| \leqslant & C\mu\sum\limits_{k\in\mathds{I}^\e} \|u_0\|_{L_2(\p\om_k^\e;\mathds{C}^n)} \|v_\e\|_{L_2(\p\om_k^\e;\mathds{C}^n)}
\\
\leqslant & C \big(\e\eta\vk+\e^{-1}\eta^{n-1}\big)^{\frac{1}{2}} \big(\kappa+\e^{-1}\eta^{n-1}\big)^{\frac{1}{2}}
\mu
\\
&\cdot\sum\limits_{k\in\mathds{I}^\e} \|u_0\|_{W_2^1(B_{\e R_3}(M_k^\e)\setminus\om_k^\e;\mathds{C}^n)} \|v_\e\|_{W_2^1(B_{\e R_3}(M_k^\e)\setminus\om_k^\e;\mathds{C}^n)}
\\
\leqslant & C \big(\e^2\eta^2\kappa+\eta^{\frac{n}{2}}\vk^{\frac{1}{2}} + \e^{-1}\eta^{n-1}
\big)\mu \|u_0\|_{W_2^1(\Om^\e;\mathds{C}^n)} \|v_\e\|_{W_2^1(\Om^\e;\mathds{C}^n)}
\\
\leqslant & C  \big(\e^2\eta^2\kappa+\eta^{\frac{n}{2}}\vk^{\frac{1}{2}} + \e^{-1}\eta^{n-1}
\big) \mu \|f\|_{L_2(\Om^\e;\mathds{C}^n)} \|v_\e\|_{W_2^1(\Om^\e;\mathds{C}^n)}.
\end{aligned}
\end{equation}

For each $k\in\mathds{I}^\e$ we define
\begin{equation*}
\la v_\e\ra_k:=\frac{1}{\mes B_{\e\eta R_3}(M_k^\e)\setminus\om_k^\e} \int\limits_{B_{\e \eta R_3}(M_k^\e)\setminus\om_k^\e} v_\e\,dx, \qquad
v_{\e,k}:=v_\e-\la v_\e\ra_k.
\end{equation*}
Then we can rewrite the second term in $g_\e$ as
\begin{equation}\label{4.17}
\begin{aligned}
\left(\frac{\p u_0}{\p\boldsymbol{\nu}},v_\e\right)_{L_2(\p\tht^\e;\mathds{C}^n)} = &
\sum\limits_{k\in\mathds{I}^\e} \left(\frac{\p u_0}{\p\boldsymbol{\nu}},v_\e\right)_{L_2(\p\om_k^\e;\mathds{C}^n)}
\\
= &\sum\limits_{k\in\mathds{I}^\e} \Bigg(\int\limits_{\p\om_k^\e} \frac{\p u_0}{\p\boldsymbol{\nu}}\,ds,
\la v_\e\ra_k\Bigg)_{\mathds{C}^n} + \sum\limits_{k\in\mathds{I}^\e} \left(\frac{\p u_0}{\p\boldsymbol{\nu}},v_{\e,k}\right)_{L_2(\p\om_k^\e;\mathds{C}^n)}.
\end{aligned}
\end{equation}
The functions $v_{\e,k}$ obey condition (\ref{3.1}) and this is why by estimate (\ref{3.2c}) we have:
\begin{align*}
\|v_{\e,k}\|_{L_2(\p\om_k^\e;\mathds{C}^n)}^2 \leqslant & C
\e\eta
\|\nabla v_\e\|_{L_2(B_{\e\eta R_3}(M_k^\e)\setminus\om_k^\e)}^2.
\end{align*}
Hence, by Lemmata~\ref{lm3.1},~\ref{lm3.3} and inequality (\ref{2.15}),
\begin{equation}\label{4.18}
\begin{aligned}
\Bigg| \sum\limits_{k\in\mathds{I}^\e} \left(\frac{\p u_0}{\p\boldsymbol{\nu}},v_{\e,k}\right)_{L_2(\p\om_k^\e;\mathds{C}^n)}
\Bigg| \leqslant &C   \big(\e\eta\vk+\e^{-1}\eta^{n-1}\big)^\frac{1}{2} (\e\eta)^\frac{1}{2} \|u_0\|_{W_2^2(\Om^\e;\mathds{C}^n)}
\|v_\e\|_{W_2^1(\Om^\e;\mathds{C}^n)}
\\
\leqslant &C   \big(\e\eta\vk^\frac{1}{2}+\eta^\frac{n}{2}\big) \|f\|_{L_2(\Om;\mathds{C}^n)}
\|v_\e\|_{W_2^1(\Om^\e;\mathds{C}^n)}.
\end{aligned}
\end{equation}
Since the function $u_0$ belongs to $W_2^2(\Om;\mathds{C}^n)$, we can integrate by parts as follows:
\begin{equation}\label{4.25}
\int\limits_{\p\om_k^\e} \frac{\p u_0}{\p\boldsymbol{\nu}}\,ds = -\int\limits_{\om_k^\e} \sum\limits_{i,j=1}^{d} \frac{\p\ }{\p x_i} A_{ij} \frac{\p u_0}{\p x_j}\,dx.
\end{equation}
Therefore,
\begin{equation}\label{3.3}
\Bigg|\int\limits_{\p\om_k^\e} \frac{\p u_0}{\p\boldsymbol{\nu}}\,ds
\Bigg|\leqslant C \e^{\frac{n}{2}}\eta^{\frac{n}{2}} \|u_0\|_{W_2^2(\om_k^\e;\mathds{C}^n)}.
\end{equation}
We also observe that by the Cauchy-Schwartz inequality we have
\begin{equation*}
|\la v_\e\ra_k|\leqslant C \e^{-\frac{n}{2}}\eta^{-\frac{n}{2}} \|v_\e\|_{L_2(B_{\e\eta R_3}(M_k^\e)\setminus\om_k^\e)}.
\end{equation*}
Using this inequality, (\ref{3.3}), (\ref{2.15}), (\ref{3.9}) and Lemma~\ref{lm3.6}, we estimate the first term in the right hand side of (\ref{4.17}):
\begin{equation}\label{4.19}
\begin{aligned}
\Bigg|\sum\limits_{k\in\mathds{I}^\e} \Bigg(\int\limits_{\p\om_k^\e} \frac{\p u_0}{\p\boldsymbol{\nu}}\,ds,
\la v_\e\ra_k\Bigg)_{\mathds{C}^n}\Bigg|\leqslant &C
\|u_0\|_{W_2^2(\Om^\e;\mathds{C}^n)} \|v_\e\|_{L_2(B_{\e\eta R_3}(M_k^\e)\setminus\om_k^\e;\mathds{C}^n)}
\\
\leqslant & C \big(\e\eta\vk^\frac{1}{2}+\eta^{\frac{n}{2}}\big)
\|f\|_{L_2(\Om^\e;\mathds{C}^n)} \|v_\e\|_{W_2^1(\Om^\e;\mathds{C}^n)}.
\end{aligned}
\end{equation}
This estimate and (\ref{4.18}), (\ref{4.17}), (\ref{4.16}) yield:
\begin{equation*}
|g_\e|\leqslant  C \big(\big(\e^2\eta^2\kappa+\eta^{\frac{n}{2}}\vk^{\frac{1}{2}} + \e^{-1}\eta^{n-1}\big)\mu+\e\eta\vk^\frac{1}{2}+  \eta^\frac{n}{2}\big)  \|f\|_{L_2(\Om^\e;\mathds{C}^n)} \|v_\e\|_{W_2^1(\Om^\e;\mathds{C}^n)}
\end{equation*}
Having this inequality in mind, we take the real part of identity (\ref{4.1}) and in view of (\ref{4.2}) we obtain
\begin{equation}\label{4.5}
\|v_\e\|_{W_2^1(\Om^\e;\mathds{C}^n)}\leqslant C\big(\big(\e^2\eta^2\kappa+\eta^{\frac{n}{2}}\vk^{\frac{1}{2}} + \e^{-1}\eta^{n-1}\big)\mu+\e\eta\vk^\frac{1}{2}+  \eta^\frac{n}{2}\big)\|f\|_{L_2(\Om;\mathds{C}^n)},
\end{equation}
which proves inequality (\ref{2.16}).

We proceed to proving (\ref{2.17}). We use an approach proposed recently in \cite{PMA22-2} as a modification of the technique developed in \cite{Pas1}, \cite{Pas2}, \cite{Pas3}, \cite{Sen1}, \cite{Sen2}.  We first introduce a formally adjoint differential expression for $\cL$:
\begin{equation*}
\cL^*:= -\sum\limits_{i,j=1}^{d} \frac{\p\ }{\p x_i} \overline{A_{ji}} \frac{\p\ }{\p x_j}
  - \sum\limits_{j=1}^{n}\frac{\p\ }{\p x_j}\overline{A_j}  +\overline{A_0}.
\end{equation*}
In view of the condition $A_j\in W_\infty^1(\Om;\mathds{C}^n)$, this differential expression is of the same structure as $\cL$. In particular, for $u,v\in W_2^2(\Om;\mathds{C}^n)$ with the zero trace on $\p\Om$ we have
$(u,\cL^* v)_{L_2(\Om;\mathds{C}^n)}=\hf^0(u,v)$. Using this identity, we can reproduce the proof of the solvability of problem (\ref{2.11}) and we see that with the same $\l_0$ the problem
\begin{equation}\label{4.7}
(\cL^*-\overline{\l})w=h\quad\text{in}\quad\Om,\qquad w=0\quad\text{on}\quad\p\Om,
\end{equation}
is uniquely solvable in $W_2^2(\Om;\mathds{C}^n)$ as $\RE\l<\l_0$ for each $h\in L_2(\Om;\mathds{C}^n)$ and
\begin{equation}\label{4.8}
\|w\|_{W_2^2(\Om;\mathds{C}^n)}\leqslant C(\l)\|h\|_{L_2(\Om;\mathds{C}^n)},
\end{equation}
where a constant $C(\l)$ is independent of $h$. We choose $h$ in (\ref{4.7}) as $h:=v_\e$ in $\Om^\e$, $h=0$ in $\tht^\e$; the corresponding solution is denoted by $w_\e$. Then we multiply the equation in (\ref{4.7}) by $v_\e$ and integrate once by parts over $\Om^\e$. This gives:
\begin{equation}\label{4.9}
\|v_\e\|_{L_2(\Om;\mathds{C}^n)}^2=\hf(v_\e,w_\e)-\l (v_\e,w_\e)_{L_2(\Om;\mathds{C}^n)}  - \left(v_\e,\frac{\p w_\e}{\p\boldsymbol{\nu}^*}\right)_{L_2(\p\tht^\e;\mathds{C}^n)},
\end{equation}
where
\begin{equation*}
\frac{\p \ }{\p\boldsymbol{\nu}^*}:=\sum\limits_{i,j=1}^{d} \nu_i\overline{A_{ji}} \frac{\p\ }{\p x_j}+\sum\limits_{i=1}^{d} \nu_i\overline{A_i} \frac{\p\ }{\p x_j}.
\end{equation*}
Then we write integral identity (\ref{2.18}) with $w_\e$ as the test function and rewrite identity (\ref{4.13}) with $v_\e$ replaced by $w_\e$. The difference of two obtained relations gives an identity similar to (\ref{4.1}):
\begin{equation*}
\hf(v_\e,w_\e)-\l(v_\e,w_\e)_{L_2(\Om^\e;\mathds{C}^n)}= -\big(a^\e(\,\cdot\,,u_\e),w_\e\big)_{L_2(\p\tht^\e;\mathds{C}^n)}
-\left(\frac{\p u_0}{\p\boldsymbol{\nu}},w_\e\right)_{L_2(\p\tht^\e;\mathds{C}^n)}.
\end{equation*}
It follows from this identity and (\ref{4.9}) that
\begin{equation}\label{4.10}
\|v_\e\|_{L_2(\Om;\mathds{C}^n)}^2=   -\big(a^\e(\,\cdot\,,u_\e),w_\e\big)_{L_2(\p\tht^\e;\mathds{C}^n)}
- \left(v_\e,\frac{\p w_\e}{\p\boldsymbol{\nu}^*}\right)_{L_2(\p\tht^\e;\mathds{C}^n)}- \left(\frac{\p u_0}{\p\boldsymbol{\nu}},w_\e\right)_{L_2(\p\tht^\e;\mathds{C}^n)}.
\end{equation}

Conditions (\ref{2.6}) yield that $|a^\e(x,u_\e)|\leqslant \mu(\e)|u_\e|$.
Using this inequality, (\ref{2.12}), Lemma~\ref{lm3.1} and the identity
\begin{equation*}
\int\limits_{\p\om_k^\e} \frac{\p w_\e}{\p\boldsymbol{\nu}^*}\,ds = -\int\limits_{\om_k^\e}\Bigg(\sum\limits_{i,j=1}^{d} \frac{\p\ }{\p x_i} A_{ij} \frac{\p w_\e}{\p x_j}+\sum\limits_{i=1}^{d} \frac{\p\ }{\p x_i}\overline{A_i} w_\e\Bigg)\,dx
\end{equation*}
instead of (\ref{4.25}),  we estimate as in (\ref{4.16}), (\ref{4.17}), (\ref{4.18}),  (\ref{4.19}):
\begin{equation}\label{4.20}
\begin{aligned}
& \Big|\big(a^\e(\,\cdot\,,u_\e),w_\e\big)_{L_2(\p\tht^\e;\mathds{C}^n)}\Big| \leqslant C \mu\|u_\e\|_{L_2(\p\tht^\e;\mathds{C}^n)} \|w_\e\|_{L_2(\p\tht^\e;\mathds{C}^n)}
\\
&\hphantom{\Big|\big(a^\e(\,\cdot\,,u_\e),w_\e\big)_{L_2(\p\tht^\e;\mathds{C}^n)}\Big| }\leqslant C  \big(\e^2\eta^2\kappa+\eta^{\frac{n}{2}}\vk^{\frac{1}{2}} + \e^{-1}\eta^{n-1}\big)\mu \|f\|_{L_2(\Om;\mathds{C}^n)} \|w_\e\|_{W_2^2(\Om;\mathds{C}^n)},
\\
&\Bigg|\left(v_\e,\frac{\p w_\e}{\p\boldsymbol{\nu}^*}\right)_{L_2(\p\tht^\e;\mathds{C}^n)}\Bigg|
\leqslant C \big(\e\eta\vk^\frac{1}{2}+  \eta^\frac{n}{2}\big) \|v_\e\|_{W_2^1(\Om;\mathds{C}^n)}\|w_\e\|_{W_2^2(\Om;\mathds{C}^n)}.
\end{aligned}
\end{equation}

Let us estimate the third term in the right hand side of (\ref{4.10}). Since the function $u_0$ belongs to $W_2^2(\Om;\mathds{C}^n)$, we can rewrite this term   via the following integration by parts:
\begin{align*}
\bigg(\frac{\p u_0}{\p\boldsymbol{\nu}},w_\e\bigg)_{L_2(\p\tht^\e;\mathds{C}^n)}
=&-((\cL-\l)u_0,w_\e)_{L_2(\tht^\e;\mathds{C}^n)}
 - \sum\limits_{i,j=1}^{d} \bigg(A_{ij} \frac{\p u_0}{\p x_j},\frac{\p w_\e}{\p x_i}\bigg)_{L_2(\tht^\e;\mathds{C}^n)}
 \\
 &+\sum\limits_{j=1}^{d} \bigg(A_j \frac{\p u_0}{\p x_j},  w_\e \bigg)_{L_2(\tht^\e;\mathds{C}^n)} +\big((A_0-\l)u_0,w_\e)_{L_2(\tht^\e;\mathds{C}^n)}.
\end{align*}
By  Lemma~\ref{lm3.6} and the equation in (\ref{2.11}) we then obtain:
\begin{align*}
\bigg|\bigg(\frac{\p u_0}{\p\boldsymbol{\nu}},w_\e\bigg)_{L_2(\p\tht^\e;\mathds{C}^n)} \bigg| \leqslant &C \big(\e\eta\vk^\frac{1}{2}+\eta^\frac{n}{2}\big) \|f\|_{L_2(\tht^\e;\mathds{C}^n)} \|w_\e\|_{W_2^1(\Om;\mathds{C}^n)}
\\
&+ C\big(\e^2\eta^2\vk+\eta^n\big) \|u_0\|_{W_2^2(\Om;\mathds{C}^n)}\|w_\e\|_{W_2^2(\Om;\mathds{C}^n)}.
\end{align*}
This inequality  and (\ref{4.20}), (\ref{4.10}), (\ref{4.8}), (\ref{4.5}) imply
\begin{align*}
\|v_\e\|_{L_2(\Om;\mathds{C}^n)}^2\leqslant &C \big(\e^2\eta^2\kappa+\eta^{\frac{n}{2}}\vk^{\frac{1}{2}} + \e^{-1}\eta^{n-1}\big)\mu \|f\|_{L_2(\Om;\mathds{C}^n)} \|w_\e\|_{W_2^2(\Om;\mathds{C}^n)}
\\
&+ C \big(\e\eta\vk^\frac{1}{2}+  \eta^\frac{n}{2}\big) \|v_\e\|_{W_2^1(\Om;\mathds{C}^n)}\|w_\e\|_{W_2^2(\Om;\mathds{C}^n)}
\\
&+C \big(\e\eta\vk^\frac{1}{2}+\eta^\frac{n}{2}\big) \|f\|_{L_2(\tht^\e;\mathds{C}^n)} \|w_\e\|_{W_2^1(\Om;\mathds{C}^n)}
\\
&+ C\big(\e^2\eta^2\vk+\eta^n\big) \|u_0\|_{W_2^2(\Om;\mathds{C}^n)}\|w_\e\|_{W_2^2(\Om;\mathds{C}^n)}
\\
\leqslant & C\Big(\big(\big(\e^2\eta^2\kappa+\eta^{\frac{n}{2}}\vk^{\frac{1}{2}} + \e^{-1}\eta^{n-1}\big)\mu + \e^2\eta^2\vk+\eta^n \big) \|f\|_{L_2(\Om;\mathds{C}^n)}
\\
&+\big(\e\eta\vk^\frac{1}{2}+\eta^\frac{n}{2}\big) \|f\|_{L_2(\tht^\e;\mathds{C}^n)} \Big)\|v_\e\|_{L_2(\Om^\e;\mathds{C}^n)}.
\end{align*}
The obtained inequality proves (\ref{2.17}).

\section{Sharpness of estimates}\label{sec:sharp}

In this section we study the sharpness of inequality (\ref{2.16}). The main idea is to find appropriate examples of problems (\ref{2.7}), for which we can estimate from below the norms $\|u_\e-u_0\|_{W_2^1(\Om^\e)}$ by the same expressions as in the right hand side of (\ref{2.16}).
Throughout this section, in various estimates, we often use conditions (\ref{2.10}) not saying this explicitly. We also consider only the case of a single scalar equation, that is, as $d=1$.

We let $\cL:=-\D$, $a^\e(x,u)=\mu(\e) u$. One of the points $M_k^\e$ is chosen to be zero and we suppose that all other points $M_k^\e$ are separated from zero by a distance at least $1$. All domains $\om_{k,\e}$ are chosen as $\om_{k,\e}:=B_1(0)$. Under such choice, each cavity is a ball of the radius $\e\eta$. It is clear that  we can take $\l_0=-1$. We then let $R_1=R_2=1$ and suppose that $R_3$ can be taken obeying $R_3\geqslant \frac{4}{3}$.

We first assume that $\mu=0$. Let $U_0=U_0(\xi)$ be an infinitely differentiable function defined in $\mathds{R}^d\setminus B_1(0)$ and vanishing outside $B_{\frac{11}{10}}(0)$. We suppose that $U_0$ is not identically zero in $B_{\frac{11}{10}}(0)\setminus B_1(0)$ and
\begin{equation}\label{4.22}
\frac{\p U_0}{\p|\xi|}=1\quad\text{on}\quad \p B_1(0).
\end{equation}
Then we let $u_0(x):=U_0(x\e^{-1}\eta^{-1})$ and $f_0:=(\cL+1) u_0$. Having the formula
\begin{equation*}
f_0(x)=\e^{-2}\eta^{-2}(-\D_\xi+\e^2\eta^2) U_0(x\e^{-1}\eta^{-1})
\end{equation*}
in mind, it is clear that we can choose the function $U_0$ so that
\begin{equation}\label{4.23}
\|f_0\|_{L_2(\Om)}\geqslant C (\e\eta)^{\frac{n}{2}-2};
\end{equation}
throughout this section by $C$ we denote various inessential fixed positive constant independent of $\e$ and $\eta$. The function $u_0$ obviously solves problem (\ref{2.11}).

The ordinary differential equation
\begin{equation*}
\left(-\frac{1}{r^{n-1}}\frac{d\ }{dr}r^{n-1}\frac{d\ }{dr} + 1\right)X=0,\qquad r>0,
\end{equation*}
has a non-trivial infinitely differentiable solution exponentially decaying at infinity and behaving at zero as
\begin{equation}\label{4.26}
\begin{aligned}
&X(r)=r^{-n+2}+O(r^{-n+2}) && \text{for odd}\quad n\geqslant 3,
\\
&X(r)=r^{-n+2}+O(r^{-n+2}\ln r)\quad && \text{for even}\quad n\geqslant 4,
\\
&X(r)=-\ln r+O(r^2\ln r) && \text{for}\quad n=2.
\end{aligned}
\end{equation}
This solution can be expressed in terms of an appropriate Bessel function.  It is obvious that the function $X(|x|)$ solves the equation $(\cL+1)X(|x|)=0$ in $\mathds{R}^d$.

We then let $u_\e$ to be the solution to problem (\ref{2.7}) with the above chosen function $f$ and we define one more function:
\begin{equation}\label{4.27}
\tilde{u}_\e(x):=u_0(x)-\frac{1}{\e\eta X'(\e\eta)} X(|x|)\chi_1(|x|),
\end{equation}
where $\chi_1=\chi_1(t)$ is an infinitely differentiable cut-off function equalling to one as $|t|<\tfrac{1}{2}$ and vanishing as $|t|>\tfrac{2}{3}$. This function solves problem (\ref{2.7}) with the right hand side
\begin{equation*}
f+\frac{1}{\e\eta X'(\e\eta)}F_{\e,1},\quad\text{where}\quad
F_1(x):=2\nabla X(|x|)\cdot \nabla \chi_1(|x|) + X(|x|)\D\chi_1(|x|).
\end{equation*}
It follows from the definition of the cut-off function $\chi_1$ that the function $F_1$ is non-zero only as $\tfrac{1}{2}<|x|<\tfrac{2}{3}$. Employing then asymptotics (\ref{4.26}), by straightforward calculations we confirm that
\begin{equation}\label{4.30}
\|F_1\|_{L_2(\Om)}\leqslant  C,\qquad
\frac{1}{\e\eta |X'(\e\eta)|}\|F_1\|_{L_2(\Om)} \leqslant C (\e\eta)^{n-2}.
\end{equation}
Apriori estimate (\ref{2.12}) and inequality (\ref{4.23}) then imply the estimate
\begin{equation}\label{4.32}
\|u_\e-\tilde{u}_\e\|_{W_2^1(\Om)}\leqslant C (\e\eta)^{n-2} \leqslant C(\e\eta)^{\frac{n}{2}+2}\|f\|_{L_2(\Om)}.
\end{equation}
At the same time, using the definition of the function $\chi_1$ and identities (\ref{4.26}),
by straightforward calculations we find that
\begin{equation}\label{4.37}
\|\nabla X(|x|)\chi(|x|)\|_{L_2(\Om)}\geqslant C (\e\eta)^{-\frac{n}{2}+1}\vk^\frac{1}{2},
\end{equation}
and therefore, due to (\ref{4.23}),
\begin{equation*}
\|\nabla (u_\e-u_0)\|_{L_2(\Om)}\geqslant C (\e\eta)^{\frac{n}{2}-1} \vk^\frac{1}{2} \geqslant C \e\eta \vk^\frac{1}{2}\|f\|_{L_2(\Om)}.
\end{equation*}
This estimate and (\ref{4.32}) prove that the term $\e\eta\vk^{\frac{1}{2}}$ in inequality (\ref{2.16}) is order sharp.

In order to prove the order sharpness of the terms in (\ref{2.16}) multiplied by $\mu$, we should compare the solutions of problem (\ref{2.7}) with $\mu=0$ and $\mu\ne0$. Here instead of  (\ref{4.22}) we suppose that the function $U_0$ satisfies the boundary condition
\begin{equation*}
\frac{\p U_0}{\p|\xi|}=0,\qquad U_0=1\qquad\text{on}\qquad \p B_1(0).
\end{equation*}
Then the function $u_0(x):=U_0(x\e^{-1}\eta^{-1})$ solves problem (\ref{2.7}) with $\mu=0$. Let $u_\e$ be the solution to the same problem but for $\mu>0$. Instead of (\ref{4.27}), we introduce a function
\begin{equation*}
\tilde{u}_\e(x):=u_0(x)-\frac{\mu}{X'(\e\eta)+\mu X(\e\eta)} X(|x|)\chi_1(|x|).
\end{equation*}
This function also solves problem (\ref{2.7}) with $\mu>0$ and the right hand side in the equation is \begin{equation*}
f+\frac{\mu}{X'(\e\eta)+\mu X(\e\eta)}F_1.
\end{equation*}
Employing then the first estimate from (\ref{4.30}) and (\ref{2.12}), (\ref{4.23}), (\ref{4.26}), we get an inequality similar to (\ref{4.32}):
\begin{equation}\label{4.36}
\|u_\e-\tilde{u}_\e\|_{W_2^1(\Om)}\leqslant C (\e\eta)^{n-1}\mu \leqslant C(\e\eta)^{n+1}\mu\|f\|_{L_2(\Om)}.
\end{equation}
It also follows from (\ref{4.37}), (\ref{4.23}) that
\begin{equation*}
\|\nabla (u_\e-u_0)\|_{L_2(\Om)}\geqslant C (\e\eta)^{\frac{n}{2}} \vk^\frac{1}{2} \mu\geqslant C \e^2\eta^2\vk^\frac{1}{2}\mu\|f\|_{L_2(\Om)}.
\end{equation*}
This estimate and (\ref{4.36}) proves the order sharpness of the term $\e^2\eta^2\mu$ in (\ref{2.16}) for $n\geqslant 5$.

To show the sharpness of the other terms, we consider a different situation. Namely, we assume that $\mu$ is arbitrary and we choose
\begin{equation*}
\Om=\mathds{R}^d,\qquad  \mathds{I}^\e:=\mathds{Z}^n,\qquad M_k^\e:=\e k,\qquad \om_{k,\e}:=B_1(0).
\end{equation*}
The above choice defines a periodic perforation in $\Om$:
\begin{equation*}
\tht^\e=\bigcup\limits_{k\in \mathds{Z}^n} B_{\e\eta}(k),\qquad \Om^\e=\mathds{R}^d\setminus \tht^\e.
\end{equation*}

Let $U_1=U_1(x)$ be a non-zero infinitely differentiable compactly supported function and $u_0(x):=(-\cL+1)U_1(x\b)$, $f(x):=(-\cL+1) u_0(x)$, where $\b$ is some sufficiently large but fixed positive parameter. Then $u_0$ solves problem (\ref{2.11}) with just introduced function $f$.  By $Y_0=Y_0(\z)$ we denote the $(0,1)^n$-periodic solution to the following boundary value problem
\begin{equation}
\label{5.25}
\begin{gathered}
-\D_\z Y_0=c_2\quad\text{in}\quad \mathds{R}^d\setminus \mathds{Z}^n,\qquad c_2:=\mes \mathbb{S}^{n-1},
\\
\begin{aligned}
&Y_0(\z)=\frac{1}{(2-n)|\z-k|^{n-2}}+O(|\z-k|), \quad&& \z\to k,\quad n\geqslant 3, \quad k\in \mathds{Z}^n,
\\
& Y_0(\z)=\ln|\z-k|+O(|\z-k|), &&\z\to k, \quad n=2, \quad k\in \mathds{Z}^n,
\end{aligned}
\end{gathered}
\end{equation}
where $\mes \mathbb{S}^{n-1}$ is the area of the unit sphere in $\mathds{R}^d$. It is easy to see that  this problem satisfies the standard solvability condition; the uniqueness of the solution  is ensured by the order of the error terms in the prescribed asymptotics.

We denote $u_1(x):= U_1(x\b)$ and
\begin{align*}
&\tilde{u}_\e(x):=u_0(x)+ \a_0(\e) Y_0\left(\frac{x}{\e}\right)u_0(x)  + \sum\limits_{j=1}^{n} \a_j(\e) \frac{\p u_0}{\p x_j} \frac{\p Y_0}{\p\xi_j}\left(\frac{x}{\e}\right)-c_2\e^{-2}\a_0(\e) u_1(x),
\\
&
\begin{aligned}
&\a_0(\e):=\frac{\e\eta^{n-1}(\e)\mu(\e)}{1+\frac{\e\eta(\e)\mu(\e)}{n-2}},\quad
&&\a_j(\e):=\frac{1+\frac{\a_0(\e)}{(2-n)\eta^{n-2}(\e)}}{n-1+\e\eta(\e)\mu(\e)}\e\eta^n(\e)
\quad&&\text{as}\quad n\geqslant 3,
\\
&\a_0(\e):=\frac{\e\eta\mu}{1-\e\eta\mu\ln\eta}, &&  \a_j(\e):=\frac{1+\a_0(\e)\ln\eta(\e)}{1+\e\eta(\e)\mu(\e)}\e\eta^2(\e)
 &&\text{as}\quad n=2.
\end{aligned}
\end{align*}

The smoothness of the function $U_1$ as well as the compactness of its support allow us to apply estimate (\ref{2.16}) to the problem for $u_1$ and this immediately imply:
\begin{equation}\label{4.48}
c_2\e^{-2}\a_0\|u_1\|_{W_2^1(\Om^\e)}=O\big(\e^{-1}\eta^{n-1}\mu\b^{1-\frac{n}{2}}\big);
\end{equation}
here and in the next displayed inequalities the  identities $f_1=O(f_2)$ are to be treated as a shorthand notation for a two-sided estimate $C f_2\leqslant f_1\leqslant C^{-1} f_2$ with some constant $C$ we independent of $\e$, $\eta$, $\mu$ and $\b$. Using asymptotics for the function $Y_0$ in (\ref{5.25}) and the properties of $u_0$, by routine straightforward calculations we check the following identities:
\begin{align*}
& \|\a_0\nabla Y_0 u_0\|_{L_2(\Om^\e)}=O \big(\mu\eta^{\frac{n}{2}}\vk^\frac{1}{2}\b^{2-\frac{n}{2}}+\e\eta^{n-1}\mu\b^{3-\frac{n}{2}} (1+\kappa^\frac{1}{2}\eta^{-\frac{n}{2}+2})\big),
\\
&
\bigg\|\sum\limits_{j=1}^{n} \a_j \nabla \frac{\p u_0}{\p x_j} \frac{\p Y_0}{\p\xi_j}
\bigg\|_{L_2(\Om^\e)}=O\big(\eta^{\frac{n}{2}}\b^{3-\frac{n}{2}} + \e\eta^{\frac{n}{2}+1}\vk^\frac{1}{2}\b^{4-\frac{n}{2}}\big).
\end{align*}
These estimates and (\ref{4.48}) imply that by choosing $\b$ large enough but fixed, we can always get the inequality
\begin{equation}\label{4.49}
\bigg\|\a_0  Y_0 u_0 + \sum\limits_{j=1}^{n} \a_j  \frac{\p u_0}{\p x_j} \frac{\p Y_0}{\p\xi_j} - c_2\e^{-2}\a_0  u_1 \bigg\|_{W_2^1(\Om^\e)}\geqslant C \big(\e^{-1}\eta^{n-1}\mu + \mu\eta^{\frac{n}{2}}\vk^\frac{1}{2} + \eta^{\frac{n}{2}}
\big);
\end{equation}
here and till the end of the section the symbol $C$ stands for positive constants independent of $\e$, $\eta$ and $\mu$. Indeed, if one of the three  functions in the right hand of the above inequality is much greater than the two others, then inequality (\ref{4.49}) is obvious. If at least two of them are of the same order, then we just choose a sufficiently large but fixed $\b$ to ensure that the contributions of the functions in the left hand side of (\ref{4.49}) do not cancel out.

It follows from the definition of $u_1$ and $Y_0$ that the function $\tilde{u}_\e$ solves the equation
\begin{gather*}
(\cL+1)\tilde{u}_\e=f_0+F_{\e,2}\quad\text{in}\quad \Om^\e,
 \\
F_{\e,2}:=-\a_0\big(2\nabla Y_0\cdot \nabla u_0 + Y_0\D u_0 \big) + \sum\limits_{j=1}^{n} \a_j \left( \frac{\p Y_0}{\p \xi_j} \frac{\p u_0}{\p x_j} - 2\nabla \frac{\p Y_0}{\p \xi_j} \cdot \nabla \frac{\p u_0}{\p x_j} - \frac{\p Y_0}{\p \xi_j} \D \frac{\p u_0}{\p x_j} \right).
\end{gather*}
It is straightforward to confirm that the function $F_{\e,2}$ can be represented as follows
\begin{equation}\label{4.41}
\begin{gathered}
F_{\e,2}=F_{\e,2,0} + \sum\limits_{i=1}^{n} \frac{\p\ }{\p x_i} F_{\e,2,i},
\\
F_{\e,2,0}:=\a_0 Y_0 \D u_0+\sum\limits_{j=1}^{n} \a_j \frac{\p Y_0}{\p \xi_j}(\D+1) \frac{\p u_0}{\p x_j},
\qquad F_{\e,2,i}:=-2\a_0 Y_0 \frac{\p u_0}{\p x_i} - 2\sum\limits_{j=1}^{n} \a_j  \frac{\p Y_0}{\p\xi_j} \frac{\p u_0}{\p x_j}.
\end{gathered}
\end{equation}
Using the periodicity of $Y_0$ and its asymptotics at $\xi=k$, see (\ref{5.25}), as well as the smoothness of $u_0$, by straightforward calculations we find that
\begin{equation}\label{4.42}
\begin{aligned}
&\|F_{\e,2,0}\|_{L_2(\mathds{R}^d)}+\sum\limits_{i=1}^{n}\|F_{\e,2,i}\|_{L_2(\mathds{R}^d)}\leqslant C  \e\Big(
\big(\eta^{n-1}+\eta^{\frac{n}{2}+1}\kappa^\frac{1}{2}\big)\mu + \eta^{\frac{n}{2}+1}\vk^\frac{1}{2}\Big),
\\
&|F_{\e,2,i}|_{C(\p\om_k^\e)}\leqslant C (\e\eta\vk\mu+\e\eta).
\end{aligned}
\end{equation}
Using asymptotics for $Y_0$ once again, we see that the function $\tilde{u}_\e$ satisfies the boundary conditions
\begin{equation}\label{4.43}
\frac{\p u_0}{\p\boldsymbol{\nu}}+\mu u_0=\phi_\e\quad\text{on}\quad\p\tht^\e,
\qquad \|\phi_\e\|_{C(\p\om_k^\e)}\leqslant C\big(\eta^{n-1}\mu
+\eta^n+\e\eta\big).
\end{equation}

By $\hat{u}_\e$ we denote the solution to the problem
\begin{equation*}
(\cL+1)\hat{u}_\e=F_{\e,2}\quad\text{in}\quad\Om^\e,\qquad \left(\frac{\p\ }{\p\boldsymbol{\nu}}+\mu\right)\hat{u}_\e=\phi_\e\quad\text{on}\quad\p\tht^\e.
\end{equation*}
This problem can be easily reduced to one with a homogeneous boundary condition, which is obviously solvable. The integral identity associated with this problem with $\hat{u}_\e$ taken as a test function reads as
\begin{equation*}
\|\hat{u}_\e\|_{W_2^1(\Om^\e)} + \mu \|\hat{u}_\e\|_{L_2(\p\tht^\e)}^2=(\phi_\e,\hat{u}_\e)_{L_2(\p\tht^\e)} + (F_{\e,2},\hat{u}_\e)_{L_2(\Om^\e)}.
\end{equation*}
Employing the representation for $F_{\e,2}$ in (\ref{4.41}), we integrate by parts and rewrite the above identity as follows:
\begin{equation*}
\|\hat{u}_\e\|_{W_2^1(\Om^\e)}^2 + \mu \|\hat{u}_\e\|_{L_2(\p\tht^\e)}^2=(\phi_\e,\hat{u}_\e)_{L_2(\p\tht^\e)}+ (F_{\e,2,0},\hat{u}_\e)_{L_2(\Om^\e)} + \sum\limits_{i=1}^{n} (F_{\e,2,i}\nu_i,\hat{u}_\e)_{L_2(\p\tht^\e)}
-\sum\limits_{i=1}^{n}\left(F_{\e,2,i},\hat{u}_\e\right)_{L_2(\Om^\e)}.
\end{equation*}
Owing to estimates in (\ref{4.42}), (\ref{4.43}) and Lemma~\ref{lm3.1}, the right hand side of this identity satisfies the estimate
\begin{align*}
\Big|(&\phi_\e,\hat{u}_\e)_{L_2(\p\tht^\e)}+ (F_{\e,2,0},\hat{u}_\e)_{L_2(\Om^\e)} + \sum\limits_{i=1}^{n} (F_{\e,2,i}\nu_i,\hat{u}_\e)_{L_2(\p\tht^\e)}
-\sum\limits_{i=1}^{n}\left(F_{\e,2,i},\hat{u}_\e\right)_{L_2(\Om^\e)}\Big|
\\
\leqslant & C\e^{\frac{1}{2}}\eta^{\frac{n-1}{2}}\big(\e^{\frac{1}{2}}\eta^{\frac{1}{2}}\vk^{\frac{1}{2}} + \e^{-\frac{1}{2}}\eta^{\frac{n-1}{2}}\big)\big( (\eta^{n-1}+\eta^{\frac{n}{2}+1}\kappa^\frac{1}{2})\mu+ \e\eta^{\frac{n}{2}+1}\vk^\frac{1}{2}\big)
\big(\e\eta\vk\mu+\e\eta+\eta^{n-1}\mu
+\eta^n\big)
 \|\hat{u}_\e\|_{W_2^1(\Om^\e)}
 \\
\leqslant & C \big(\e\eta^{\frac{1}{2}}\vk^{\frac{1}{2}} + \eta^{\frac{n-1}{2}}\big) \big( (\eta^{\frac{3(n-1)}{2}}+\eta^{n+\frac{1}{2}}\kappa^\frac{1}{2})\mu+ \eta^{n+\frac{1}{2}}\vk^\frac{1}{2}\big)
\big(\e\eta\vk\mu+\e\eta+\eta^{n-1}\mu
+\eta^n\big) \|\hat{u}_\e\|_{W_2^1(\Om^\e)}.
 \end{align*}
Due to conditions (\ref{2.10}), the first and the third brackets in the right hand side of the above inequality tend to zero as $\e\to+0$, while the second bracket satisfy the relations:
\begin{align*}
\big((\eta^{\frac{3(n-1)}{2}}+\eta^{n+\frac{1}{2}}\kappa^\frac{1}{2})\mu+ \eta^{n+\frac{1}{2}}\vk^\frac{1}{2}\big)=&\e\eta^{\frac{n-1}{2}}\e^{-1}\eta^{n-1}\mu + \eta^\frac{n+1}{2}\kappa^{\frac{1}{2}} \mu\eta^\frac{n}{2} + \eta^{\frac{n+1}{2}}\eta^{\frac{n}{2}} \vk^\frac{1}{2}
\\
\leqslant &\max\Big\{\e\eta^{\frac{n-1}{2}},\eta^\frac{n+1}{2}\kappa^{\frac{1}{2}}, \eta^{\frac{n+1}{2}}\Big\} \big(
\e^{-1}\eta^{n-1}\mu + \mu\eta^\frac{n}{2} + \eta^{\frac{n}{2}} \vk^\frac{1}{2}\big).
\end{align*}
Hence, in view of estimate  (\ref{4.49}) we conclude that the terms $\e^{-1}\eta^{n-1}\mu$, $\eta^\frac{n}{2}\vk^\frac{1}{2}\mu$ and $\eta^\frac{n}{2}$ in (\ref{2.16}) are order sharp.

\section*{Funding}

The work is supported by  the Czech Science Foundation within the project 22-18739S.

\section*{Conflict of interest}

The author declares that he has no conflicts of interest.

\section*{Data availability}

Not applicable in the manuscript as no datasets were generated or analysed during the current study.

\end{document}